\newtheorem{theorem}{Theorem}[section]
\newtheorem{proposition}[theorem]{Proposition}
\newtheorem{corollary}[theorem]{Corollary}
\newtheorem{example}[theorem]{Example}
\newtheorem{lemma}[theorem]{Lemma}
\newtheorem{remark}[theorem]{Remark}
\def\length{\mathrm{length}\hspace{0.1cm}}
 \DeclareMathOperator{\Hom}{Hom}
\def\Cohom{\mathrm{Cohom}}
\def\Ext{\mathrm{Ext}}
\def\Hom{\mathrm{Hom}}
\def\Irr{\mathrm{Irr}}
\def\inj{\mathrm{inj}}
\def\comp{\mathrm{comp}}
\def\rad{\mathrm{rad}}
\def\dim{\mathrm{dim}}
\def\End{\mathrm{End}}
\def\Soc{\mathrm{soc\hspace{0.1cm}}}
\def\Socle{\mathrm{soc}}
\def\C{\mathcal{C}}
\def\T{\mathcal{T}}
\def\M{\mathcal{M}}
\def\point#1{*+[o][F-]{\text{\scriptsize $#1$}}}
\title{Serial coalgebras and their valued Gabriel quivers}
\author{Jos\'{e} G\'{o}mez-Torrecillas}
\author{Gabriel Navarro}
\address{Department of Algebra\\
University of Granada \\
Avda. Fuentenueva s/n\\ E-18071\\ Granada\\ Spain}
\email{gomezj@ugr.es, gnavarro@ugr.es}
\thanks{Keywords and phrases:
serial coalgebras, valued Gabriel quiver, representation-directed
coalgebras, hereditary coalgebras, prime coalgebras, strictly quasi-finite coalgebras.\\
2000
\textit{Mathematics Subject Classification}: 18E35, 16W30.\\
Research supported by Spanish MEC project MTM2004-01406, and
FEDER}
\begin{document}

\maketitle

\begin{abstract}
We study serial coalgebras by means of their valued Gabriel quivers.
In particular, Hom-computable and representation-directed coalgebras
are characterized. The Auslander-Reiten quiver of a serial coalgebra
is described. Finally, a version of Eisenbud-Griffith theorem is
proved, namely, every subcoalgebra of a prime, hereditary and strictly
quasi-finite coalgebra is serial.
\end{abstract}

\section*{Introduction}

A systematic study of serial coalgebras was initiated in
\cite{cuadra-serial}, where, in particular, it was shown that any
serial indecomposable coalgebra over an algebraically closed field
is Morita-Takeuchi equivalent to a subcoalgebra of a path
coalgebra of a quiver which is either a cycle or a chain (finite
or infinite) \cite[Theorem 2.10]{cuadra-serial}. In this paper, we
take advantage of the valued Gabriel quivers associated to a
coalgebra to characterize indecomposable serial coalgebras over
any field (Theorem \ref{shapeserial}). In conjunction with
localization techniques (see Section \ref{localizacion}), this
combinatorial tool allows to complete the study made in
\cite{cuadra-serial} in more remarkable aspects. Thus, in Section
\ref{hom-comp}, we characterize Hom-computable serial coalgebras
in the sense of \cite{simson07b} (Proposition
\ref{serialcomputable}), and representation-directed coalgebras
(Proposition  \ref{representationdirected}). Section
\ref{finitedimensional} is devoted to describe the
Auslander-Reiten quiver of the category of finite dimensional
(right) comodules of a serial coalgebra.

It was observed in \cite{cuadra-serial} that a consequence of
\cite[Corollary 3.2]{eisenbud1} is that the finite dual coalgebra of
a hereditary noetherian prime algebra over a field is serial. In
Section \ref{EisenbudGriffith} we reconsider this result of Eisenbud
and Griffith from the coalgebraic point of view: we prove, using the
results developed in the previous sections, that any subcoalgebra of a prime, hereditary
and strictly quasi-finite coalgebra is serial (Corollary
\ref{EGtheorem}).

Throughout we fix a field $K$ and we assume $C$ is a
$K$-\emph{coalgebra}.  We refer the reader to the books \cite{Abe},
\cite{montgomery} and \cite{sweedler} for notions and notations
about coalgebras. Unless otherwise stated, all $C$-comodules are
right $C$-comodules. It is well-known that $C$ has a decomposition,
as right $C$-comodule,
$$C_C=\bigoplus_{i\in I_C}
\hspace{0.1cm} E_i^{t_i} \hspace{0.1cm},$$
 where $\{E_i\}_{i\in
I_C}$ is a complete set of pairwise non-isomorphic
\emph{indecomposable injective} right $C$-comodules and $t_i$ is a
positive integer for any $i\in I_C$. This produces a decomposition
of the \emph{socle} of $C$ (the sum of all its simple subcomodules),
$\Soc C$, as follows:
$$ \Soc C=\bigoplus_{i\in I_C} \hspace{0.1cm}
S_i^{t_i} \hspace{0.1cm},$$
 where $\{S_i\}_{i\in I_C}$ is a complete
set of pairwise non-isomorphic \emph{simple} right $C$-comodules. It
is easy to prove that
$$t_i=\frac{\dim_K S_i}{\dim_K(\End_C(S_i))}$$ for any $i\in  I_C$,
see \cite{simson2}.

For any right $C$-comodule $M$, we denote by $\Soc M$ the socle of
$M$ and by $E(M)$ its \emph{injective envelope}. We assume that
$\Soc E_i=S_i$, for each $i\in I_C$, and consequently, $E(S_i)=E_i$.

Throughout we denote by $G_i$ the division $K$-algebra of
endomorphism $\End_C(S_i)$ for each $i\in I_C$. The coalgebra $C$ is
said to be \emph{basic} if $t_i=1$ for any $i\in I_C$, or,
equivalently, if $\dim_K S_i=\dim_KG_i$ for any $i\in I_C$, or,
equivalently, if $S_i$ is simple subcoalgebra of $C$ for any $i\in
I_C$, see for instance \cite{simson07}. In particular, $C$ is called
\emph{pointed} if $\dim_K S_i=1$ for any $i\in I_C$.

If the field $K$ is algebraically closed then $C$ is pointed if and
only if $C$ is basic (cf. \cite[Corollary 2.7]{simson2}).

Since every coalgebra is Morita-Takeuchi equivalent (that is, their
categories of comodules are equivalent) to a basic one (cf.
\cite{chin}), throughout we assume that $C$ is basic and there are
decompositions
\begin{equation}\label{decomp1}
C=\bigoplus_{i\in I_C} \hspace{0.1cm} E_i \hspace{0.5cm} \text{ and
} \hspace{0.5cm} \Soc C=\bigoplus_{i\in I_C} \hspace{0.1cm} S_i
\hspace{0.1cm},
\end{equation}
where $E_i\ncong E_j$ and $S_i\ncong S_j$ for $i\neq j$.
Symmetrically, there exists the left-hand version of all the facts
explained above. In particular, $C$ admits a decomposition as left
$C$-comodule
\begin{equation}\label{decomp2}
 {_C}C=\bigoplus_{i\in I_C} \hspace{0.1cm} F_i
\hspace{0.5cm} \text{ and } \hspace{0.5cm} \Soc C=\bigoplus_{i\in
I_C} \hspace{0.1cm} S'_i \hspace{0.1cm}.
\end{equation}

\begin{remark} Observe that $S_i=S'_i$ for any $i\in I_C$, since $C$
is basic, and therefore each simple (left or right) $C$-comodule is
a simple subcoalgebra. Nevertheless, the right injective envelope
$E_i$ and the left injective envelope $F_i$ of $S_i$ could be
different.
\end{remark}

We recall from \cite{cuadra-serial} that a right $C$-comodule $M$ is
said to be uniserial if its lattice of subcomodules is a chain. This
property can be characterized through the socle filtration, namely,
$M$ has a filtration
$$0\subset \Soc M\subset \Socle^2 M\subset \cdots \subset M$$
called the \emph{Loewy series}, where, for $n>1$, $\Socle^nM$ is the
unique subcomodule of $M$ satisfying that $\Socle^{n-1}M\subset
\Socle^nM$ and
$$\frac{\Socle^{n}M}{\Socle^{n-1}M}=\Socle\left (\frac{M}{\Socle^{n-1}M}\right ),$$
see \cite{green} and \cite{navarro} for some properties of the Loewy
series.

\begin{lemma}\cite{cuadra-serial} The following statements are
equivalent:
\begin{enumerate}[$(a)$]
\item $M$ is uniserial.
\item The Loewy series is a composition series.
\item Each finite dimensional subcomodule of $M$ is uniserial.
\end{enumerate}
\end{lemma}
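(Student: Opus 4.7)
The plan is to establish the cycle (a) $\Rightarrow$ (b) $\Rightarrow$ (c) $\Rightarrow$ (a); note that (a) $\Rightarrow$ (c) is also immediate, as any sublattice of a chain is a chain.

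For (a) $\Rightarrow$ (b), if the subcomodule lattice of $M$ is a chain, then so is that of every quotient $M/\Socle^{n-1}M$. Any non-zero comodule whose subcomodule lattice is a chain has simple socle, since two distinct simple subcomodules would intersect in zero and thereby violate totality. Thus $\Socle^n M/\Socle^{n-1}M$ is simple for every $n\geq 1$, so the Loewy series is a composition series.

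For (b) $\Rightarrow$ (c), I would prove by induction on $n$ that $\Socle^n M$ is uniserial; since every finite-dimensional subcomodule sits inside some $\Socle^n M$ (using that $M=\bigcup_n \Socle^n M$), this suffices. The base case $n=1$ is given by (b). The step rests on the auxiliary fact that \emph{if $L$ has simple socle and $L/\Socle L$ is uniserial, then $L$ is uniserial}: every non-zero subcomodule $N\subseteq L$ has $0\neq \Socle N\subseteq \Socle L$, and since $\Socle L$ is simple this forces $\Socle L\subseteq N$, placing the non-zero subcomodules of $L$ in bijection with those of the uniserial $L/\Socle L$. Apply this with $L=\Socle^n M$: its socle equals $\Socle M$, which is simple by (b), while $\Socle^n M/\Socle M$ has Loewy quotients $\Socle^{k+1}M/\Socle^k M$, still simple by (b), and so is uniserial by the inductive hypothesis.

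For (c) $\Rightarrow$ (a), suppose $N_1, N_2\subseteq M$ and $N_1\not\subseteq N_2$; pick $x\in N_1\setminus N_2$ and an arbitrary $y\in N_2$. By the Fundamental Theorem of Comodules, the subcomodule $\langle x,y\rangle$ is finite-dimensional, hence uniserial by (c), so the cyclic subcomodules $\langle x\rangle$ and $\langle y\rangle$ are comparable. Since $x\notin N_2$ excludes $\langle x\rangle\subseteq \langle y\rangle\subseteq N_2$, we must have $\langle y\rangle\subseteq \langle x\rangle\subseteq N_1$, whence $y\in N_1$ and therefore $N_2\subseteq N_1$. The chief obstacle is the auxiliary lemma invoked in (b) $\Rightarrow$ (c); once this is in hand, the remainder is essentially lattice-theoretic bookkeeping, and the use of the Fundamental Theorem of Comodules in (c) $\Rightarrow$ (a) is what makes the local-to-global passage from finite-dimensional subcomodules to the full chain condition possible.
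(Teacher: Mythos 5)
The paper itself offers no proof of this lemma: it is quoted verbatim from \cite{cuadra-serial} (where it appears as a preliminary result), so there is no in-paper argument to compare yours against. Judged on its own terms, your proof is correct and complete. The cycle (a) $\Rightarrow$ (b) $\Rightarrow$ (c) $\Rightarrow$ (a) works: the implication (a) $\Rightarrow$ (b) correctly uses that a nonzero comodule has nonzero socle (local finiteness) and that a chain lattice forbids two distinct simple subcomodules; the auxiliary fact in (b) $\Rightarrow$ (c) --- that simple socle plus uniserial top forces uniseriality, via the order isomorphism $N \mapsto N/\Socle L$ on nonzero subcomodules --- is exactly the right reduction; and (c) $\Rightarrow$ (a) correctly exploits the Fundamental Theorem of Comodules to compare $\langle x\rangle$ and $\langle y\rangle$ inside a finite-dimensional subcomodule. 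Two small points you should make explicit. First, in (b) $\Rightarrow$ (c) the induction must be run over \emph{all} comodules satisfying (b), not over the fixed $M$: the comodule $\Socle^n M/\Socle M$ is $\Socle^{n-1}(M/\Socle M)$, not $\Socle^{n-1}M$, so the inductive hypothesis is applied to $M/\Socle M$, using the standard identification $\Socle^{k}(M/\Socle M)=\Socle^{k+1}M/\Socle M$ (the paper cites this kind of fact as \cite[Lemma 1.4]{navarro}). Second, you silently use the exhaustiveness $M=\bigcup_n \Socle^n M$ of the Loewy filtration to place every finite-dimensional subcomodule inside some $\Socle^n M$; this is true for comodules (see \cite{green}) but is worth stating, since it is precisely the point where the argument is coalgebra-specific rather than purely lattice-theoretic.
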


The coalgebra $C$ is said to be right (left) serial if any
indecomposable injective right (left) $C$-comodule is uniserial. $C$
is called serial if it is both right and left serial.

Throughout we denote by $\M^C_f$, $\M^C_{qf}$ and $\M^C$ the
category of finite dimensional, quasi-finite and all right
$C$-comodules, respectively. Dually, ${^C}\M_f$, ${^C}\M_{qf}$ and
${^C}\M$ denote the corresponding categories of left $C$-comodules.

A full subcategory $\T$ of $\M^C$ is said to be \emph{dense} (or a
\emph{Serre class}) if each exact sequence
$$\xymatrix{0 \ar[r] &M_1 \ar[r] & M \ar[r] & M_2 \ar[r] & 0}$$ in
$\M^C$  satisfies that $M$ belongs to $\T$ if and only if $M_1$ and
$M_2$ belong to $\T$. Following \cite{gabriel} and \cite{popescu},
for any dense subcategory $\T$ of $\M^C$, there exists an abelian
category $\M^{C}/\T$ and an exact functor $T:\M^{C}\rightarrow
\M^{C}/\T$, such that $T(M)=0$ for each $M \in \T$, satisfying the
following universal property: for any exact functor
$F:\M^C\rightarrow \C$ such that $F(M)=0$ for each $M\in\T$, there
exists a unique functor $\overline{F}:\M^C/\T \rightarrow \C$
verifying that $F=\overline{F}T$. The category $\M^C/\T$ is called
the \emph{quotient category} of $\M^C$ with respect to $\T$, and $T$
is known as the \emph{quotient functor}.

Let now  $\T$ be a dense subcategory of the category $\M^C$, $\T$ is
said to be \emph{localizing} (cf. \cite{gabriel}) if the quotient
functor $T:\M^C\rightarrow \M^C/\T$ has a right adjoint functor $S$,
called the \emph{section functor}.  If the section functor is exact,
$\T$ is called \emph{perfect localizing}. Let us list some
properties of the localizing functors (cf. \cite[Chapter
III]{gabriel}).

\begin{lemma} Let $\T$ be a dense subcategory
of the category of right comodules $\M^C$ over a coalgebra $C$. The
following statements hold:
\begin{enumerate}[$(a)$]
\item $T$ is exact.
\item If $\T$ is localizing, then the section functor $S$ is left exact and
the equivalence $TS\simeq 1_{\M^C/\T}$ holds.
\end{enumerate}
\end{lemma}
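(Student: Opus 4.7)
The plan is to deduce both assertions directly from the standard construction of the Gabriel quotient $\M^C/\T$, following \cite[Chapter III]{gabriel}. One realizes $\M^C/\T$ as the category with the same objects as $\M^C$ and with morphisms given by the filtered colimit
\begin{equation*}
\Hom_{\M^C/\T}(M,N) = \varinjlim_{(M',N')} \Hom_{\M^C}(M'\!,\, N/N'),
\end{equation*}
indexed by pairs $M'\subseteq M$, $N'\subseteq N$ with $M/M',\,N'\in\T$, and where the quotient functor $T$ sends a morphism $f$ to the class of $f$ in this colimit. Once this presentation is in place, the two claims become formal.

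For (a), I would check that ordinary kernels and cokernels in $\M^C$ remain kernels and cokernels in $\M^C/\T$ after applying $T$. Given a short exact sequence $0\to M_1\to M\to M_2\to 0$ in $\M^C$, the object $T(M_1)$ is the kernel of $T(M)\to T(M_2)$ and $T(M_2)$ is its cokernel in $\M^C/\T$: the filtered-colimit description of Hom-sets lets one lift any test morphism to a representative in $\M^C$, where the universal property of the original (co)kernel provides the desired factorization, unique up to a morphism vanishing in the colimit. Hence $T$ preserves finite limits and colimits, so in particular it is exact.

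For (b), left exactness of $S$ is immediate because $S$, as a right adjoint, preserves all limits. For the equivalence $TS\simeq 1_{\M^C/\T}$ I would unwind the adjunction $(T,S)$: the characterizing property of a localizing subcategory is that, for every $M\in\M^C$, the unit $\eta_M\colon M\to ST(M)$ has kernel and cokernel in $\T$, so by (a) the morphism $T(\eta_M)$ is an isomorphism in $\M^C/\T$. The triangle identity $\epsilon_{T(M)}\circ T(\eta_M)=1_{T(M)}$ then forces the counit $\epsilon_{T(M)}$ to be an isomorphism as well. Since in the construction above every object of $\M^C/\T$ is of the form $T(M)$ for some $M\in\M^C$, the counit $\epsilon$ is pointwise an isomorphism, hence a natural equivalence.

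The main obstacle is not conceptual but bookkeeping: one must unpack the Gabriel quotient carefully enough to verify that (co)kernels in $\M^C$ descend to (co)kernels in $\M^C/\T$, and that morphisms represented by distinct pairs $(M',N')$ are correctly identified in the colimit. Once this is done, exactness of $T$ and the counit isomorphism fall out by purely formal adjoint-functor calculus; as the statement is verbatim that of \cite[Chapter III]{gabriel}, I would refer the reader there for the detailed verifications.
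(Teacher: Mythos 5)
Your proposal is correct, and it matches the paper's treatment: the paper states this lemma without proof as a recollection from \cite[Chapter III]{gabriel}, and your sketch is precisely the standard argument contained in that reference (exactness of $T$ via the filtered-colimit description of Hom-sets in the quotient, left exactness of $S$ as a right adjoint, and $TS\simeq 1$ from the triangle identity once $T(\eta_M)$ is invertible, equivalently from full faithfulness of $S$). The only cosmetic point is that, with the paper's definition of ``localizing'' (existence of the right adjoint), the fact that the unit $\eta_M$ has kernel and cokernel in $\T$ is a consequence established in \cite{gabriel} rather than the defining property, but this does not affect your argument.
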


From the general theory of localization in Grothendieck categories
\cite{gabriel}, it is well-known that there exists a one-to-one
correspondence between localizing subcategories of $\M^C$ and sets
of indecomposable injective right $C$-comodules, and, as a
consequence, sets of simple right $C$-comodules. More precisely, a
localizing subcategory is determined by an injective right
$C$-comodule $E=\oplus_{j\in J} E_j$, where $J\subseteq I_C$
(therefore the associated set of indecomposable injective comodules
is $\{E_j\}_{j\in J}$). Then $\M^C/\T\simeq \M^D$, where $D$ is the
coalgebra of coendomorphism $\Cohom_C(E,E)$ (cf. \cite{takeuchi} for
definitions), and the quotient and section functors are
$\Cohom_C(E,-)$ and $-\square_D E$, respectively.

In \cite{cuadra}, \cite{jmnr} and \cite{woodcock}, localizing
subcategories are described by means of idempotents in the dual
algebra $C^*$. In particular, it is proved that the quotient
category $\M^C/\T$ is the category of right comodules over the
coalgebra $eCe$, where $e\in C^*$ is an idempotent associated to the
localizing subcategory $\T$ (that is, $E=Ce$, where $E$ is the
injective right $C$-comodule associated to the localizing
subcategory $\T$). The coalgebra structure of $eCe$ (cf.
\cite{radford}) is given by
$$\Delta_{eCe} (exe)=\displaystyle \sum_{(x)} ex_{(1)}e\otimes
ex_{(2)}e \hspace{0.4cm}\text{ and }\hspace{0.4cm}
\epsilon_{eCe}(exe)=\epsilon_C(x)$$ for any $x\in C$, where
$\Delta_C(x)= \sum_{(x)} x_{(1)} \otimes x_{(2)}$ using the
sigma-notation of \cite{sweedler}. Throughout we denote by $\T_e$
the localizing subcategory associated to the idempotent $e$. For
completeness, we recall from \cite{cuadra} (see also \cite{jmnr})
the following description of the localizing functors. We recall
that, given an idempotent $e\in C^*$, for each right $C$-comodule
$M$, the vector space $eM$ is endowed with a structure of right
$eCe$-comodule given by
$$\rho_{eM}(ex)=\sum_{(x)} ex_{(1)}\otimes
ex_{(0)} e$$ where $\rho_{M}(x)=\sum_{(x)} x_{(1)}\otimes x_{(0)}$
using the sigma-notation of \cite{sweedler}.

\begin{lemma} Let $C$ be a coalgebra and $e$ be an idempotent in
$C^*$. Then the following statements hold:
\begin{enumerate}[$(a)$]
\item The quotient functor $T:\M^C\rightarrow \M^{eCe}$ is naturally
equivalent to the functor $e(-)$. $T$ is also naturally equivalent
to the cotensor functor $-\square_{C} eC$ and the $\Cohom$ functor
$T_e=\Cohom_C(Ce,-)$.
\item The section functor $S:\M^{eCe} \rightarrow \M^C$ is naturally
equivalent to the cotensor functor $S_e=-\square_{eCe} Ce$.
\item $\T_e$ is perfect localizing if and only if $Ce$ is
injective as right $eCe$-comodule.
\end{enumerate}
\end{lemma}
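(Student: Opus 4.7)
The plan is to treat the three parts in order, leaning on the universal properties of the quotient functor and the general adjunction between Cohom and cotensor, since the statement is essentially a recall from \cite{cuadra} and \cite{jmnr}.

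For part $(a)$, my strategy is to identify three naturally equivalent exact functors $\M^C\to\M^{eCe}$ that annihilate $\T_e$ and use the uniqueness afforded by the universal property of the quotient. First I would check that $e(-)$ is additive and exact: since $e$ is idempotent in $C^*$, multiplication by $e$ is a projection on each $C$-comodule $M$ (viewing $M$ as a rational $C^*$-module), so $eM$ is a direct summand and the functor preserves kernels and cokernels. The right $eCe$-comodule structure on $eM$ is the one written in the excerpt. Next I would show $e(M)=0$ for $M\in\T_e$ by using the correspondence $\T_e\leftrightarrow Ce$: the objects of $\T_e$ are precisely those $M$ annihilated by $e$ under the $C^*$-action. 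Finally I would establish the natural isomorphisms $\Cohom_C(Ce,M)\cong eM\cong M\square_C eC$. For the first, the standard argument is to use the counit $\varepsilon_C$ to produce the evaluation-at-$e$ map $\Cohom_C(Ce,M)\to eM$, $f\mapsto f(e\otimes -)$, and check bijectivity using the defining universal property of Cohom; for the second, the cotensor $M\square_C eC$ consists of elements of $M\otimes eC$ satisfying the cobalance condition, and one verifies directly that $(1\otimes e)\circ\rho_M$ lands in the cotensor and furnishes the required isomorphism. Naturality and the $eCe$-comodule structure are routine. The universal property then forces all three to coincide with $T$.

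For part $(b)$, once the quotient functor is identified as $\Cohom_C(Ce,-)$, the section functor $S$ is its right adjoint, and the standard Hom-tensor style adjunction for bicomodules gives $\Hom_{eCe}(N,\Cohom_C(Ce,M))\cong\Hom_C(N\square_{eCe}Ce,M)$ naturally in $N\in\M^{eCe}$ and $M\in\M^C$, so $S$ is naturally equivalent to $S_e=-\square_{eCe}Ce$. Here $Ce$ carries its natural $(eCe,C)$-bicomodule structure, and the adjunction follows from the standard manipulation of cotensor and Cohom via the sigma notation.

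For part $(c)$, by definition $\T_e$ is perfect localizing exactly when $S$ is exact. Since $S_e=-\square_{eCe}Ce$ is left exact on general grounds, exactness is equivalent to preservation of surjections. The classical criterion for the cotensor functor (see the general theory recalled in the preceding paragraphs of the excerpt) states that $-\square_{eCe}Ce$ is exact if and only if $Ce$ is injective as an $eCe$-comodule on the appropriate side, which is the statement given. The main obstacle in this whole lemma is really the first one: pinning down the natural isomorphism $\Cohom_C(Ce,M)\cong eM$ with all of its compatibilities, because once that identification is in place every other claim reduces to a direct application of general Grothendieck-category localization and the standard cotensor-Cohom adjunction.
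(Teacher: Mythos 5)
A preliminary remark on the comparison you were asked to survive: the paper does not prove this lemma at all --- it is explicitly \emph{recalled} from \cite{cuadra} and \cite{jmnr} ("For completeness, we recall\dots"), so there is no in-text argument to measure your proposal against, and it has to stand on its own. Its architecture (exactness of $e(-)$, the description $\T_e=\{M: eM=0\}$, direct natural isomorphisms $\Cohom_C(Ce,M)\cong eM\cong M\square_C eC$, the Cohom--cotensor adjunction for $(b)$, and coflatness-equals-injectivity for $(c)$) is the standard one from those sources. Two steps, however, are off as written. In $(a)$, the closing claim that ``the universal property then forces all three to coincide with $T$'' is not valid: the universal property only says that an exact functor $F$ annihilating $\T_e$ factors as $F=\overline{F}T$; it does not identify $F$ with $T$ (the zero functor is exact and kills $\T_e$). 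To conclude you also need that the induced functor $\overline{F}\colon\M^C/\T_e\to\M^{eCe}$ is an equivalence compatible with the canonical one, i.e.\ Takeuchi's correspondence already quoted in the paragraph preceding the lemma, which asserts $T\simeq\Cohom_C(Ce,-)$. Granting that, your direct isomorphisms carry the rest, so this gap is repairable by citation rather than fatal.

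The second problem is more concrete: your adjunction in $(b)$ has the wrong handedness. The functor $\Cohom_C(Ce,-)$ is the \emph{left} adjoint of $-\square_{eCe}Ce$, so the natural isomorphism reads
$$\Hom_{eCe}\bigl(\Cohom_C(Ce,M),N\bigr)\;\cong\;\Hom_C\bigl(M,\,N\square_{eCe}Ce\bigr),$$
not $\Hom_{eCe}(N,\Cohom_C(Ce,M))\cong\Hom_C(N\square_{eCe}Ce,M)$ as you display. Your version would exhibit $-\square_{eCe}Ce$ as a \emph{left} adjoint of $T$; but the section functor is by definition the \emph{right} adjoint, and a cotensor functor, being only left exact in general, cannot be a left adjoint unless it is already exact --- so the displayed formula is false in general and, taken literally, proves the wrong statement. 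With the adjunction oriented correctly, $(b)$ follows by uniqueness of right adjoints, and your reduction of $(c)$ to ``$-\square_{eCe}Ce$ exact iff $Ce$ is injective over $eCe$'' is fine; just pin down the side on which $Ce$ is an $eCe$-comodule so that it matches the convention in the statement.
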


We refer the reader to \cite{jmn}, \cite{jmn2} and \cite{jmnr} for
basic definitions, notations and properties about quivers and path
coalgebras. The localization in categories of comodules over path
coalgebras is described in detail in \cite{jmnr}.

\section{The valued Gabriel quiver}

Associating a graphical structure to a certain mathematical object
is a very common strategy. Sometimes, it provides us a nice method
for replacing the object with a simpler one and improving our
intuition about its properties. In our case, when dealing with
representation theory of coalgebras, the quivers associated to a
coalgebra play a prominent r\^{o}le in order to study their
structure in depth. This section is devoted to analyze the shape of
the so-called valued Gabriel quiver of a serial coalgebra carrying
on with the results obtained in \cite{cuadra-serial}. Throughout we
assume that $C$ is a basic coalgebra with decompositions
(\ref{decomp1}) and (\ref{decomp2}). Following \cite{justus}, let us
recall the notion of right \emph{valued Gabriel quiver} $(Q_C,d_C)$
of the coalgebra $C$ as follows: the set of vertices of $(Q_C,d_C)$
is the set of simple right $C$-comodules $\{S_i\}_{i\in I_C}$, and
there exists a unique valued arrow
$$\xymatrix{ S_i \ar[rr]^-{(d'_{ij},d''_{ij})} & & S_j}$$ if and
only if $\Ext_C^1(S_i,S_j)\neq 0$ and,
$$\text{$d'_{ij}=\dim_{G_i} \Ext_C^1(S_i,S_j)$   and
$d''_{ij}=\dim_{G_j} \Ext_C^1(S_i,S_j)$},$$ as a right $G_i$-module
and as a left $G_j$-module, respectively.

The (non-valued) Gabriel quiver of $C$ is obtained by taking the
same set of vertices and the number of arrows from a vertex $S_i$ to
a vertex $S_j$ is given by the integer $\dim_{G_i}
\Ext^1_C(S_i,S_j)$, where $\Ext^1_C(S_i,S_j)$ is viewed as right
$G_i$-module. If $C$ is pointed (or $K$ is algebraically closed)
then it is isomorphic to the one used by Montgomery in
\cite{montgomery2} and Woodcock in \cite{woodcock} in order to prove
that $C$ is a subcoalgebra of the path coalgebra of its (non-valued)
Gabriel quiver.

In \cite{simson06}, the valued Gabriel quiver of $C$ is described
through the notion of irreducible morphisms between indecomposable
injective right $C$-comodules. Let us denote by $\inj^C$ (respect.
${^C}\inj$) the full subcategory of $\M^C$ (respect. ${^C}\M$)
formed by socle-finite (i.e., comodules whose socle is
finite-dimensional) injective right (respect. left) $C$-comodules.
Let $E$ and $E'$ be two comodules in $\inj^C$. A morphism
$f:E\rightarrow E'$ is said to be irreducible if $f$ is not an
isomorphism and given a factorization
$$\xymatrix{ E \ar[rr]^-{f} \ar[rd]_-{g} &    &   E'\\
&  Z   \ar[ru]_{h} &  }$$ of $f$, where $Z$ is in $\inj^C$, $g$ is
a section, or $h$ is a retraction. Analogously to the case of
finite-dimensional algebras, there it is proven that the set of
irreducible morphism $\Irr_C(E_i,E_j)$ between two indecomposable
injective right $C$-comodules $E_i$ and $E_j$ is isomorphic, as
$G_j$-$G_i$-bimodule, to the quotient
$\rad_C(E_i,E_j)/\rad_C^2(E_i,E_j)$. We recall that, for each two
indecomposable injective right $C$-comodules $E_i$ and $E_j$, the
\emph{radical} of $\Hom_C(E_i,E_j)$ is the $K$-subspace
$\rad_C(E_i,E_j)$ of $\Hom_C(E_i,E_j)$ generated by all
non-isomorphisms. Observe that if $i\neq j$, then
$\rad_C(E_i,E_j)=\Hom_C(E_i,E_j)$. The square of $\rad_C(E_i,E_j)$
is defined to be the $K$-subspace
$$\rad_C^2(E_i,E_j) \subseteq \rad_C(E_i,E_j) \subseteq \Hom_C(E_i,E_j)$$
generated by all composite homomorphisms of the form
$$\xymatrix{E_i \ar[r]^-{f} & E_k \ar[r]^-{g} & E_j,}$$ where
$f\in \rad_C(E_i,E_k)$ and $g\in \rad_C(E_k,E_j)$. The $m$th power
$\rad_C^m(E_i,E_j)$ of $\rad_C(E_i,E_j)$ is defined analogously, for
each $m>2$.

\begin{lemma}\cite[Theorem 2.3(a)]{simson06} Let $C$ be a basic
coalgebra and set $G_i=\End_C(S_i)$ for each
$i\in I_C$. There is an arrow
$$\xymatrix{ S_i \ar[rr]^-{(d'_{ij},d''_{ij})} & & S_j}$$
in the right valued Gabriel quiver $(Q_C,d_C)$ of $C$ if and only if
$\Irr_C(E_j,E_i)\neq 0$ and
$$\text{$d'_{ij}=\dim_{G_j} \Irr_C(E_j,E_i)$  \hspace{0.3cm} and \hspace{0.3cm}
$d''_{ij}=\dim_{G_i} \Irr_C(E_j,E_i)$},$$ as right $G_j$-module and
 as left $G_i$-module, respectively.
\end{lemma}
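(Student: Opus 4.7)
The plan is to produce a bimodule identification
\[
\Ext^1_C(S_i,S_j)\;\cong\;\rad_C(E_j,E_i)/\rad_C^2(E_j,E_i),
\]
and then invoke the isomorphism of the right-hand side with $\Irr_C(E_j,E_i)$ recalled in the paragraph just above the statement. Once both sides of the lemma are translated into this common object, the arrow-existence criterion is equivalent to its non-vanishing, and the two dimension formulas come out by reading off a single multiplicity.

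First, I would compute $\Ext^1_C(S_i,S_j)$ from the injective envelope short exact sequence $0\to S_j\to E_j\to E_j/S_j\to 0$. Applying $\Hom_C(S_i,-)$ and using that $E_j$ is injective (so $\Ext^1_C(S_i,E_j)=0$), together with the fact that $\Hom_C(S_i,S_j)\to\Hom_C(S_i,E_j)$ is an isomorphism (both sides are $0$ for $i\neq j$ and equal $G_i$ for $i=j$, since $S_j=\Soc E_j$), I obtain
\[
\Ext^1_C(S_i,S_j)\;\cong\;\Hom_C(S_i,E_j/S_j)\;=\;\Hom_C\bigl(S_i,\Soc(E_j/S_j)\bigr),
\]
naturally in the $G_j$- and $G_i$-actions. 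Writing $\Soc(E_j/S_j)=\bigoplus_k S_k^{m_{kj}}$, this gives $\dim_{G_i}\Ext^1_C(S_i,S_j)=m_{ij}$, and an arrow exists in $(Q_C,d_C)$ exactly when $m_{ij}>0$.

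Next I would compute $\rad_C(E_j,E_i)/\rad_C^2(E_j,E_i)$. I start by identifying $\rad_C(E_j,E_i)\cong\Hom_C(E_j/S_j,E_i)$: for $i\neq j$ any morphism $E_j\to E_i$ kills $S_j$ since $\Hom_C(S_j,S_i)=0$; for $i=j$ any non-isomorphism of $E_j$ is necessarily non-injective (an injective endomorphism of the essential extension $S_j\hookrightarrow E_j$ is automatically an isomorphism), and hence vanishes on $S_j$. Then I would identify $\rad_C^2(E_j,E_i)$ with the space of morphisms vanishing on $\Socle^2 E_j$: one containment is immediate, because the first factor of any composition in $\rad^2$ sends $\Socle^2 E_j$ into $\Soc E_k=S_k$, which the second factor annihilates; the reverse containment is obtained by noting that a morphism killing $\Socle^2 E_j$ factors through $E_j/\Socle^2 E_j$, and can be realised as a composition through $E(E_j/S_j)=\bigoplus_k E_k^{m_{kj}}$, each component being a non-isomorphism. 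Restriction to $\Socle^2 E_j/S_j=\Soc(E_j/S_j)$, combined with the surjectivity guaranteed by injectivity of $E_i$, yields
\[
\rad_C(E_j,E_i)/\rad_C^2(E_j,E_i)\;\cong\;\Hom_C\bigl(\Soc(E_j/S_j),S_i\bigr)\;\cong\;G_i^{m_{ij}}.
\]

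Comparing the two descriptions, both are $m_{ij}$ copies of $G_i$; matching the natural compositional actions, $\Ext^1_C(S_i,S_j)$ carries its $(G_j,G_i)$-bimodule structure while $\Irr_C(E_j,E_i)$ carries a $(G_i,G_j)$-structure with the roles of $G_i$ and $G_j$ swapped. This swap is precisely what turns $\dim_{G_i}\Ext^1_C(S_i,S_j)=m_{ij}$ into $\dim_{G_j}\Irr_C(E_j,E_i)=d'_{ij}$, and analogously $\dim_{G_j}\Ext^1_C(S_i,S_j)=\dim_{G_i}\Irr_C(E_j,E_i)=d''_{ij}$. The step I expect to be most delicate is the identification of $\rad_C^2(E_j,E_i)$: ensuring that every morphism vanishing on $\Socle^2 E_j$ actually admits a factorisation with both legs non-isomorphisms uses crucially the injectivity of $E_i$ and the decomposition of the injective envelope $E(E_j/S_j)$ into indecomposable injectives, and all the bimodule bookkeeping must be threaded through this step.
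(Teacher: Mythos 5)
The paper does not actually prove this lemma: it is quoted from Simson's work \cite[Theorem 2.3(a)]{simson06} and used as a black box, so there is no internal argument to measure yours against. That said, your structural computations follow the standard route and are essentially sound: the identifications $\Ext^1_C(S_i,S_j)\cong\Hom_C(S_i,\Socle(E_j/S_j))$, $\rad_C(E_j,E_i)\cong\Hom_C(E_j/S_j,E_i)$, and $\rad_C^2(E_j,E_i)=\{h:\ h(\Socle^2E_j)=0\}$ all hold, with the one caveat that the reverse inclusion in the last identification needs extra care when $\Socle(E_j/S_j)$ has infinitely many simple summands: $\rad^2_C$ is defined as a $K$-span of composites through single indecomposables $E_k$, so the factorization through $E(E_j/S_j)=\bigoplus_kE_k^{m_{kj}}$ must be reduced to a finite sum of such composites, which is not automatic.

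The genuine gap is in your last paragraph. Your identifications give $\Irr_C(E_j,E_i)\cong\Hom_C(\Socle(E_j/S_j),S_i)\cong G_i^{m_{ij}}$ and $\Ext^1_C(S_i,S_j)\cong\Hom_C(S_i,\Socle(E_j/S_j))\cong G_i^{m_{ij}}$, so the two spaces have the \emph{same} $K$-dimension $m_{ij}\dim_KG_i$. Exchanging which side $G_i$ and $G_j$ act on does not change a $K$-dimension, so from your description one gets $\dim_{G_j}\Irr_C(E_j,E_i)=m_{ij}\dim_KG_i/\dim_KG_j$, which equals $d'_{ij}=m_{ij}$ only when $\dim_KG_i=\dim_KG_j$; what your computation actually establishes is the \emph{transposed} pair of formulas, namely $\dim_{G_i}\Irr_C(E_j,E_i)=d'_{ij}$ (left structure) and $\dim_{G_j}\Irr_C(E_j,E_i)=d''_{ij}$ (right structure). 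The sentence claiming that ``the swap is precisely what turns $\dim_{G_i}\Ext^1=m_{ij}$ into $\dim_{G_j}\Irr=d'_{ij}$'' is therefore a non sequitur, and it is the only place where the two sides of the lemma are actually matched. For the formulas exactly as stated one would need $\dim_K\Irr_C(E_j,E_i)=m_{ij}\dim_KG_j$, i.e.\ an identification of $\Irr_C(E_j,E_i)$ with a genuinely different bimodule than $\Hom_C(\Socle(E_j/S_j),S_i)$ (no $G_i$- or $G_j$-dual of $\Ext^1_C(S_i,S_j)$ will do, since dualizing over a division ring preserves $K$-dimension). So either an additional argument reconciling the conventions is required, or the identification of $\Irr$ must be redone; as written, the final step does not follow from what precedes it.
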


Let us see that right serial coalgebras are easy to distinguish
from its valued Gabriel quiver. The following lemma is not new, it
appears in \cite[Proposition 1.7]{cuadra-serial}; anyhow, for the
convenience of the reader, we give a new proof only by means of
``coalgebraic" arguments.

\begin{lemma}\label{zerosimple}
A basic coalgebra $C$ is right serial if and only if the right
$C$-comodule $\Socle^2 E/\Soc E$ is zero or simple for each
indecomposable injective right $C$-comodule $E$.
\end{lemma}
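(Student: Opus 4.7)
The forward direction ($\Rightarrow$) is essentially the definition: if $C$ is right serial and $E$ is an indecomposable injective right comodule, then $E$ is uniserial, so by the preceding lemma its Loewy series is a composition series; in particular the second Loewy layer $\Socle^2 E/\Soc E$ is either simple or zero (the latter only if $E = \Soc E$).

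For the converse, the plan is to show by induction on $n$ that for every $i \in I_C$ the $n$-th Loewy factor $\Socle^{n}E_i/\Socle^{n-1}E_i$ is simple or zero, since this combined with the preceding lemma will yield that each $E_i$ is uniserial and hence $C$ right serial. The base case $n=1$ is guaranteed because $C$ is basic, and the case $n=2$ is the standing hypothesis. For the inductive step, I would fix $E = E_i$ and consider the quotient $M = E/\Socle^{n-1}E$, whose socle is $\Socle^{n}E/\Socle^{n-1}E$; by the induction hypothesis this is a simple comodule, say isomorphic to $S_j$, so $M$ embeds into its injective envelope $E_j$.

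The key technical step is the identity
$$\Socle^{k} M = M \cap \Socle^{k} E_j$$
for every $k\geq 1$ whenever $M$ is a subcomodule of $E_j$. I would establish this by induction on $k$: for $k=1$ it follows from the fact that $M\cap \Socle E_j$ is semisimple in $M$ and every semisimple subcomodule of $M$ lies in $\Socle E_j$; for the inductive step one applies the $k=1$ case to the inclusion $M/\Socle^{k-1}M \hookrightarrow E_j/\Socle^{k-1}E_j$, using the previous identity for $k-1$. Applying this with $k=2$ yields
$$\frac{\Socle^{2}M}{\Soc M} \;\hookrightarrow\; \frac{\Socle^{2}E_j}{\Soc E_j},$$
which by hypothesis is simple or zero; hence so is $\Socle^{2}M/\Soc M = \Socle^{n+1}E/\Socle^{n}E$, completing the induction.

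Finally, to conclude that the Loewy filtration of $E$ is actually a composition series (rather than stabilising prematurely at a proper subcomodule) I will use the standard fact that every nonzero comodule has nonzero socle, which forces $E = \bigcup_{n}\Socle^{n}E$: if some Loewy factor vanishes, then $E/\Socle^{n}E$ would have zero socle and hence be zero. The main obstacle is really only the subcomodule-Loewy identity above; once that is in place, the inductive argument runs smoothly.
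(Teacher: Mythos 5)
Your proposal is correct and follows essentially the same route as the paper: induction on the Loewy layer, passing to $E/\Socle^{k-1}E$, embedding it into the indecomposable injective envelope of its (simple) socle, and comparing second Loewy layers there. The only difference is that you spell out the auxiliary identity $\Socle^{k}M = M\cap\Socle^{k}E_j$ for a subcomodule $M\leq E_j$, which the paper absorbs into a citation of a lemma on Loewy series.
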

\begin{proof}
Let $E$ be an indecomposable injective right $C$-comodule. Let us
prove that the quotient $\Socle^i E/\Socle^{i-1} E$ is simple or
zero for any $i\geq 2$. We proceed by induction on $i$. The case
$i=2$ is a consequent of the hypothesis. Let us now assume that the
statement holds for some integer $k\geq 2$, that is,
$$\Socle \left (\frac{E}{\Socle^{k-1} E} \right )=\frac{\Socle^k
E}{\Socle^{k-1} E}$$ is a simple comodule (if it was zero, then
$E=\Socle^kE$ and the result would follow) and hence the right
injective envelope of $E/\Socle^{k-1}E$ is an indecomposable
injective right $C$-comodule $E'$. Therefore, by \cite[Lemma
1.4]{navarro},
$$\frac{\Socle^{k+1} E} {\Socle^k E}
\cong \frac{\frac{\Socle^{k+1} E}{\Socle^{k-1} E}}{\frac{\Socle^{k}
E}{\Socle^{k-1} E}} \cong \frac{\Socle^2 \left (
\frac{E}{\Socle^{k-1} E} \right )} {\Socle \left (
\frac{E}{\Socle^{k-1} E} \right )} \leq \frac{\Socle^2 E'}{\Soc
E'}$$ which is simple or zero by hypothesis. The converse
implication is trivial.
\end{proof}

\begin{proposition}\label{arrows} A basic coalgebra $C$ is right serial if and only
if each vertex $S_i$ of the right valued Gabriel quiver $(Q_C,d_C)$
is at most the sink of one arrow and, if such an arrow exists, it is
of the form
$$\xymatrix{ S_j \ar[rr]^{(1,d)} & & S_i},$$ for some vertex $S_j$ and some
 positive integer $d$. In
particular, if $C$ is pointed, $C$ is right serial if and only if
each vertex in the (non-valued) Gabriel quiver of $C$ is the sink of
at most one arrow.
\end{proposition}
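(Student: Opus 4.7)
The plan is to use Lemma~\ref{zerosimple} as a bridge: $C$ is right serial exactly when $\Socle^2 E_i/\Soc E_i$ is zero or simple for every $i\in I_C$, and this condition can be read off from the first valuations of the arrows arriving at $S_i$ in $(Q_C,d_C)$.

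For fixed $i,j\in I_C$, I would apply $\Hom_C(S_j,-)$ to the short exact sequence $0\to S_i\to E_i\to E_i/S_i\to 0$. Since $E_i$ is injective, $\Ext_C^1(S_j,E_i)=0$; and since $\Soc E_i=S_i$, any homomorphism $S_j\to E_i$ lands in $S_i$, so the induced map $\Hom_C(S_j,S_i)\to \Hom_C(S_j,E_i)$ is an isomorphism. The long exact sequence therefore collapses to
$$\Ext_C^1(S_j,S_i)\;\cong\;\Hom_C(S_j,E_i/S_i)\;=\;\Hom_C\bigl(S_j,\,\Socle^2 E_i/\Soc E_i\bigr),$$
the last equality because a map from a simple comodule factors through the socle. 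Decomposing the semisimple comodule $\Socle^2 E_i/\Soc E_i\cong\bigoplus_{k\in I_C}S_k^{m_{ki}}$, one obtains $\Ext_C^1(S_j,S_i)\cong G_j^{m_{ji}}$ as a right $G_j$-module, whence
$$d'_{ji}\;=\;\dim_{G_j}\Ext_C^1(S_j,S_i)\;=\;m_{ji}.$$

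With this dictionary the statement follows by translation. By Lemma~\ref{zerosimple}, $C$ is right serial if and only if for every $i$ the comodule $\Socle^2 E_i/\Soc E_i$ is zero or simple, equivalently, for every $i$ there is at most one $j\in I_C$ with $m_{ji}\neq 0$ and in that case $m_{ji}=1$. Via $d'_{ji}=m_{ji}$ this is precisely the assertion that $S_i$ is the sink of at most one arrow of $(Q_C,d_C)$ and that, when such an arrow $S_j\to S_i$ exists, its valuation has the form $(1,d)$ with $d=d''_{ji}$ an arbitrary positive integer. The pointed case is immediate, since then $G_k=K$ for every $k$, the valued and non-valued Gabriel quivers coincide, and the condition $d'_{ji}=1$ just says that a single arrow ends at each such vertex.

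The only bookkeeping that requires any care is the identification of $\Ext_C^1(S_j,S_i)$ with $\Hom_C(S_j,\Socle^2 E_i/\Soc E_i)$, in particular the fact that $\Hom_C(S_j,S_i)\to\Hom_C(S_j,E_i)$ is an isomorphism for \emph{every} $j$ (not merely injective), and the counting of $G_j$-dimensions in the decomposition of $\Socle^2 E_i/\Soc E_i$. Both rely only on $\Soc E_i=S_i$ together with the observation that simples land in socles, so no genuine difficulty arises.
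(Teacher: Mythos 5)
Your argument is correct and follows essentially the same route as the paper: both rest on the identification $\Ext_C^1(S_j,S_i)\cong\Hom_C(S_j,E_i/S_i)$ (which the paper cites and you rederive from the long exact sequence) together with Lemma~\ref{zerosimple}, reading the multiplicities of $\Soc(E_i/S_i)$ off the first components of the valuations of arrows ending at $S_i$. The only difference is cosmetic — the paper's forward direction embeds $E_i/S_i$ into an indecomposable injective rather than invoking the multiplicity dictionary symmetrically — so no further comment is needed.
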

\begin{proof}
Recall that, for any simple right $C$-comodule $S_i$,
 $$\Ext^1_C(S_j,S_i)\cong \Hom_C(S_j,E_i/S_i)$$ as right $G_j$-modules for all
simple right $C$-comodule $S_j$, see for instance \cite{justus} and
\cite[Lemma 1.2]{navarro}.

Assume now that $C$ is right serial and $E_i/S_i\neq 0$ (otherwise
there is no arrow ending at $S_i$) then
 $E_i/S_i$ is a subcomodule of an indecomposable injective right comodule $E_j$, and then
$$\Ext^1_C(S,S_i)\cong \Hom_C(S,E_i/S_i)\cong \left\{
                    \begin{array}{ll}
                      G_j, & \text{if $S_j=S$}; \\
                      0, & \text{otherwise.}
                    \end{array}
                  \right.$$
as right $\End_C(S)$-modules. Hence, there is a unique arrow ending
at $S_i$ of the form $$\xymatrix{ S_j \ar[rr]^{(1,d)} & & S_i}.$$

Conversely, the immediate predecessors of $S_i$ correspond to the
simple comodules contained in $\Soc (E_i/S_i)$. Since, by
hypothesis, there is only one arrow ending at $S_i$,
$\Soc(E_i/S_i)=(S_j)^{t}$ for some simple right comodule $S_j$ and
some positive integer $t$. Now, since $t$ is the first component of
the label of the arrow,
 $\Socle^2 E_i/\Soc E_i=\Soc(E_i/S_i)=S_j$ is a simple comodule. By the previous lemma, $C$ is right serial.
\end{proof}

Symmetrically, we prove that $C$ is left serial if and only if each
vertex $S$ of the left valued Gabriel quiver $(_CQ,{_C}d)$ is at
most the sink of one arrow and, if such an arrow exists, it is of
the form
$$\xymatrix{ S' \ar[rr]^{(1,d)} & & S},$$ for some vertex $S'$ and some positive integer $d$.

The following simple result is very useful, see also \cite[Corollary
2.26]{justus2}.

\begin{proposition}\label{Gabvalued} Let $C$ be a basic coalgebra.
 The right valued Gabriel quiver $(Q_C,d_C)$ of $C$ is the
opposite valued quiver of the left valued Gabriel quiver
$(_CQ,{_C}d)$ of $C$. Consequently, the left (non-valued) Gabriel
quiver of  $C$ is the opposite of the right (non-valued) Gabriel
quiver of $C$.
\end{proposition}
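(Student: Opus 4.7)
The strategy is to exploit the standard $K$-linear duality between finite-dimensional right and left $C$-comodules in order to compare, for each pair $(i,j)$, the $\Ext^1$ groups that encode the arrows of $(Q_C,d_C)$ and of $(_CQ,{_C}d)$, and then to read off the valuations.

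First, both quivers share the same vertex set: by the remark following (\ref{decomp2}), $S_i=S'_i$ for every $i\in I_C$ since $C$ is basic. By the defining formulas recalled at the start of the section, an arrow $S_i\to S_j$ with valuation $(d'_{ij},d''_{ij})$ in $(Q_C,d_C)$ exists exactly when $\Ext^1_C(S_i,S_j)\neq 0$, with $d'_{ij}=\dim_{G_i}\Ext^1_C(S_i,S_j)$ as a right $G_i$-module and $d''_{ij}=\dim_{G_j}\Ext^1_C(S_i,S_j)$ as a left $G_j$-module. Dually, an arrow $S_j\to S_i$ with valuation $(a,b)$ in $(_CQ,{_C}d)$ exists exactly when the left-comodule group ${^C}\Ext^1(S_j,S_i)$ is nonzero, with $a$ and $b$ its dimensions as a right $G_j$- and a left $G_i$-module, respectively. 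Hence the proposition reduces to constructing, for every $(i,j)$, a natural $K$-isomorphism
$$\Ext^1_C(S_i,S_j)\;\cong\;{^C}\Ext^1(S_j,S_i)$$
that interchanges the sides of the $G_i$- and $G_j$-actions.

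Such an isomorphism I would obtain from the contravariant equivalence $(-)^\ast:\M^C_f\to({^C}\M_f)^{op}$ given by $K$-linear duality. Because $C$ is basic, $\dim_K S_i=\dim_K G_i$ and $S_i^\ast$ is a simple left $C$-comodule of the same $K$-dimension, which can therefore be identified with $S_i$ viewed as a left comodule $S'_i$. Applying $(-)^\ast$ to a short exact sequence $0\to S_j\to M\to S_i\to 0$ in $\M^C_f$ yields a short exact sequence $0\to S_i\to M^\ast\to S_j\to 0$ in ${^C}\M_f$, which produces the displayed $K$-isomorphism on $\Ext^1$. Moreover, the duality interchanges pre- and postcomposition and consequently swaps right and left $G_i$-actions, and analogously left and right $G_j$-actions, so that $\dim_{G_i}\Ext^1_C(S_i,S_j)=\dim_{G_i}{^C}\Ext^1(S_j,S_i)$ and $\dim_{G_j}\Ext^1_C(S_i,S_j)=\dim_{G_j}{^C}\Ext^1(S_j,S_i)$, where on each side the dimension is taken with respect to the appropriate one-sided module structure. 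It follows that the arrow $S_i\to S_j$ with label $(d'_{ij},d''_{ij})$ in $(Q_C,d_C)$ corresponds to the arrow $S_j\to S_i$ with label $(d''_{ij},d'_{ij})$ in $(_CQ,{_C}d)$, which is precisely the condition that $(Q_C,d_C)$ is the opposite valued quiver of $(_CQ,{_C}d)$; forgetting the labels yields the non-valued version.

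The main obstacle is the careful bookkeeping of bimodule structures in the previous paragraph: while the existence, exactness and effect on $\Ext^1$ of the duality $(-)^\ast$, together with the identification of $S_i^\ast$ with $S'_i$, are standard, one must check explicitly that the $G_i$- and $G_j$-actions really do swap sides in the way dictated by the notation, since this is precisely what guarantees that the two components of the valuation are reversed rather than preserved when arrows are reversed.
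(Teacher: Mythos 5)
Your proof is correct, but it takes a genuinely different route from the paper's. The paper argues at the level of injectives: it invokes the duality $D=\Cohom_C(-,C):\inj^C\rightarrow{^C}\inj$, which carries $E_i=Ce_i$ to $F_i=e_iC$ with the same socle, deduces $\Irr_C(E_i,E_j)\cong\Irr_C(F_j,F_i)$ from $\Hom_C(E_i,E_j)\cong\Hom_C(F_j,F_i)$, and then reads off the valuations from the description of the arrows in terms of $\Irr_C(E_j,E_i)\cong\rad_C(E_j,E_i)/\rad_C^2(E_j,E_i)$ (Lemma 1.5 of the paper), computing $\dim_{G_i}$ simply as $\dim_K$ divided by $\dim_K S_i$, which is where the basic hypothesis enters. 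You instead stay with the original $\Ext^1$ definition of the valued quiver and apply the elementary linear duality $(-)^*:\M^C_f\rightarrow({^C}\M_f)^{op}$ to short exact sequences of simples, using that every extension of $S_i$ by $S_j$ is finite dimensional. Your route is more self-contained, since it does not rely on the irreducible-morphism characterization of the arrows (an imported theorem of Simson), at the price of the bimodule bookkeeping you correctly flag: the duality maps $G_i=\End_C(S_i)$ anti-isomorphically onto $\End(S_i^*)$, so the right $G_i$-action by precomposition becomes a left action over the opposite algebra by postcomposition; since the dimension of a right $G$-module equals that of the corresponding left $G^{op}$-module, the two components of the valuation are exchanged exactly as required. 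Both arguments are sound and of comparable length; the paper's has the advantage of reusing the injective-comodule machinery needed elsewhere in the article, while yours makes the left--right symmetry visible directly from the definition of the quiver.
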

\begin{proof}
We recall from \cite{chin2} that there exists a duality
$D:\inj^C\rightarrow {^C}\inj$ given by $D=\Cohom_C(-,C)$ whose
inverse (which we also denote by $D$) is given by
$D=\Cohom_{C^{op}}(-,C^{op})$. Let us denote $D(E_i)=F_i$ (and then
$D(F_i)=E_i$) for each indecomposable injective right $C$-comodule
$E_i$. Therefore, $F_i$ is an indecomposable injective left
$C$-comodule. Moreover, following \cite{cuadra-serial}, if $S_i$ is
the socle of $E_i$ and $E_i=Ce_i$ for some idempotent $e_i\in C^*$,
$$F_i=D(E_i)=\Cohom_C(Ce_i,C)=e_iC,$$
then $\Soc F_i=S_i$.  Summarizing, $E_i$ and $F_i$ are the right and
the left injective envelopes of $S_i$, respectively.

Now, since $\Hom_C(E_i,E_j)\cong \Hom_C(F_j,F_i)$, for each two
indecomposable injective right comodules $E_i$ and $E_j$, it is easy
to see that also $\Irr_C(E_i,E_j)\cong \Irr_C(F_j,F_i)$ and then
$\dim_K\Irr_C(E_i,E_j)=\dim_K \Irr_C(F_j,F_i)$. Thus
$$\begin{array}{rl} \dim_{G_i} \Irr_C(E_i,E_j)& =\displaystyle\frac{\dim_K
\Irr_C(E_i,E_j)}{\dim_K S_i}\\ & =\displaystyle\frac{\dim_K
\Irr_C(F_j,F_i)}{\dim_K S_i}\\ & =\dim_{G_i} \Irr_C(F_j,F_i)
\end{array}$$
Analogously, $\dim_{G_j} \Irr_C(E_i,E_j)=\dim_{G_j}
\Irr_C(F_j,F_i)$. Therefore, there exists an arrow
$$\xymatrix{ S_i\ar[rr]^{(d'_{ij},d''_{ij})} & & S_j}$$ in $(Q_C,d_C)$
if and only if there exists an arrow
$$\xymatrix{ S_j\ar[rr]^{(d''_{ij},d'_{ij})} & & S_i}$$ in
$({_C}Q,{_C}d)$ and the result follows.

\end{proof}

Let us now prove the main result of this section that generalizes
 \cite[Theorem 2.10]{cuadra-serial}. In what
 follows we denote the labeled arrows $\xymatrix@1{\circ \ar[r]^-{(1,1)}&
 \circ}$ simply by $\xymatrix@1{\circ \ar[r]&
 \circ}$. As well we denote a valued quiver $(Q,d)$ simply by $Q$ if
 $(d^1_{ij},d^2_{ij})=(1,1)$ for any $i$ and $j$.

\begin{theorem}\label{shapeserial}
Let $C$ be a indecomposable basic coalgebra over an arbitrary field
$K$. Then $C$ is serial if and only if the right (and then also the
left) valued Gabriel quiver of $C$ is one of the following valued
quivers:

$ \begin{array}[b]{l}

\xymatrix@C=20pt{(a) & _{\infty}\mathbb{A}_\infty: & \ar@{.}[r] &
\circ \ar[r] & \circ \ar[r] & \circ \ar[r] & \circ \ar[r] & \circ
\ar@{.}[r] & }

\\

\xymatrix@C=20pt{(b) & \mathbb{A}_\infty: & \circ\ar[r] & \circ
\ar[r] & \circ \ar[r] & \circ \ar[r] & \circ \ar[r] & \circ
\ar@{.}[r] & }

\\

\xymatrix@C=20pt{(c) &  _{\infty}\mathbb{A}: & \ar@{.}[r] & \circ
\ar[r] & \circ \ar[r]& \circ \ar[r] & \circ \ar[r] & \circ \ar[r] &
\circ}

\\

\xymatrix@C=20pt{(d) & \mathbb{A}_n: &  \circ \ar[r] &  \circ \ar[r]
&  \circ \ar@{.}[r] & \circ \ar[r] & \circ \ar[r] & \circ &
\text{$n$ vertices, $n\geq 1$}}

\\

\xymatrix@C=20pt@R=5pt{  &   &   & \circ \ar[ld]  & \circ \ar[l] &   \circ \ar[l]&   &  \\
(e)  & \widetilde{\mathbb{A}}_n
:  &\circ \ar[rd] &  &    &   & \circ \ar[lu]  & \text{$n$ vertices, $n\geq 1$} \\
 &  &       & \circ \ar[r] &\circ \ar@{.}[r] & \circ \ar[ru] &  &
 }
 \end{array}$
\end{theorem}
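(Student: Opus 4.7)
The plan is to combine Proposition \ref{arrows}, its left-hand analogue, and Proposition \ref{Gabvalued} to reduce seriality to a purely combinatorial condition on $(Q_C,d_C)$, and then classify the connected valued quivers satisfying that condition.

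First, by Proposition \ref{arrows}, $C$ being right serial is equivalent to the statement that every vertex $S_j$ of $(Q_C,d_C)$ is the sink of at most one arrow, and if an arrow $S_i \xrightarrow{(d'_{ij},d''_{ij})} S_j$ exists, then $d'_{ij}=1$. Similarly, by the left-hand version of that proposition, $C$ is left serial if and only if every vertex of the left valued Gabriel quiver $({_C}Q,{_C}d)$ is the sink of at most one arrow with first label equal to $1$. Using Proposition \ref{Gabvalued}, which identifies $({_C}Q,{_C}d)$ with the opposite of $(Q_C,d_C)$ (swapping the two components of each valuation), I would translate left seriality back to $(Q_C,d_C)$ as the condition that every vertex is the \emph{source} of at most one arrow, and if an arrow $S_i \xrightarrow{(d'_{ij},d''_{ij})} S_j$ exists, then $d''_{ij}=1$.

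Putting both conditions together, $C$ is serial if and only if $(Q_C,d_C)$ satisfies: (i) every vertex has at most one incoming and at most one outgoing arrow, and (ii) every arrow carries the trivial valuation $(1,1)$. Next I would invoke the fact that the indecomposability of a basic coalgebra $C$ is equivalent to the connectedness of its Gabriel quiver (which follows from the standard block decomposition of $C$ via its simple subcoalgebras); this lets me reduce to classifying connected graphs in which every vertex has in-degree at most $1$ and out-degree at most $1$.

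The combinatorial classification is straightforward: such a connected directed graph is either a directed path — possibly finite, or infinite on one or both sides — or, if it contains a cycle, the whole graph must be that cycle (else a vertex on the cycle would have degree exceeding $1$). These five possibilities are exactly $\mathbb{A}_n$, $\mathbb{A}_\infty$, ${_\infty}\mathbb{A}$, ${_\infty}\mathbb{A}_\infty$, and $\widetilde{\mathbb{A}}_n$. The converse direction is immediate: each of the listed valued quivers satisfies (i) and (ii), so reading Propositions \ref{arrows} and \ref{Gabvalued} in the opposite direction shows $C$ is both right and left serial. I expect the only genuine subtlety to be articulating the indecomposability/connectedness step cleanly; once that is in place, the rest is a short chain of equivalences followed by an elementary graph-theoretic observation.
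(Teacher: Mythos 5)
Your proposal is correct and follows essentially the same route as the paper: both reduce seriality to the combinatorial condition that every vertex of $(Q_C,d_C)$ has in-degree and out-degree at most one with all arrows valued $(1,1)$, via Proposition \ref{arrows}, its left-hand version, and Proposition \ref{Gabvalued}, and then classify the connected quivers satisfying this (using that indecomposability forces $Q_C$ to be connected). The only cosmetic difference is that the paper handles the isolated-vertex and loop cases separately before the path/cycle dichotomy, which your classification subsumes.
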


\begin{proof}
Assume that $C$ is serial and let $S$ be a simple right (and left)
$C$-comodule. Since $C$ is right serial, there exists at most one
arrow in $(Q_C,d_C)$ ending at $S$. Analogously, since $C$ is left
serial there exists at most one arrow in $(_CQ,{_C}d)$ ending at
$S$ and then, by Proposition \ref{Gabvalued}, we may deduce that
there is at most one arrow in $(Q_C,d_C)$ starting at $S$. Also,
by Proposition \ref{arrows} and its left-hand version and
Proposition \ref{Gabvalued}, any of these possible arrows are
labelled by $(1,1)$.

Taking into account the above discussion, let $S_0$ be a simple
right comodule. If there is no arrow neither ending nor starting at
$S_0$, i.e., $S_0$ is an isolated vertex, since $C$ is
indecomposable (and therefore $Q_C$ is connected, cf.
\cite{simson06}) then $Q_C=\mathbb{A}_1$. Similarly, if there is a
loop at $S_0$, then $Q_C=\widetilde{\mathbb{A}}_1$. Therefore we may
assume that there is no loop in $Q_C$ and then $S_0$ is inside a
(maybe infinite) path
$$\xymatrix@C=15pt{  \ar@{.}[r] & \ar[r] &S_{-n}  \ar[r] & \ar@{.}[r] &
\ar[r]& S_{-1} \ar[r]& S_{0} \ar[r] & S_1 \ar[r] & \ar@{.}[r] &
\ar[r] & S_m \ar[r] & \ar@{.}[r] & }$$ If there exist two
non-negative integers $n$ and $m$ such that $S_{-n}=S_m$, then $Q_C$
must be a crown, i.e., $Q_C=\widetilde{\mathbb{A}}_p$ for some
integer $p$. If not, $Q_C$ must be a line, that is, it is a quiver
as showed in ($a$) ,($b$), ($c$) or ($d$) depending on the
finiteness of the two branches. Clearly, the converse holds.
\end{proof}
\begin{corollary}
A basic coalgebra $C$ is serial if and only if each of the connected
component of its right (or left) valued Gabriel quiver is either
$_{\infty}\mathbb{A}_\infty$, or $\mathbb{A}_\infty$, or
$_{\infty}\mathbb{A}$; or $\mathbb{A}_n$ or $
\widetilde{\mathbb{A}}_n$ for some $n\geq 1$.
\end{corollary}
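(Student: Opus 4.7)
The plan is to reduce to the indecomposable case already handled by Theorem \ref{shapeserial}. Every coalgebra decomposes uniquely as a direct sum of indecomposable subcoalgebras $C = \bigoplus_{\lambda \in \Lambda} C_\lambda$, and each simple (resp.\ indecomposable injective) right $C$-comodule is in fact a comodule over a unique $C_\lambda$. Hence the valued Gabriel quiver $(Q_C,d_C)$ is the disjoint union of the valued Gabriel quivers $(Q_{C_\lambda},d_{C_\lambda})$, and since the valued Gabriel quiver of an indecomposable coalgebra is connected (cf.\ \cite{simson06}, as invoked in the proof of Theorem \ref{shapeserial}), the connected components of $(Q_C,d_C)$ are precisely the $(Q_{C_\lambda},d_{C_\lambda})$.

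First I would check that $C$ is (right) serial if and only if each $C_\lambda$ is (right) serial. This is immediate from the definition: an indecomposable injective right $C$-comodule is necessarily contained in some unique $C_\lambda$, so the indecomposable injective right $C$-comodules are the disjoint union over $\lambda$ of the indecomposable injective right $C_\lambda$-comodules, and uniseriality of each one is equivalent to the uniseriality of all of them. The same argument works on the left. Consequently, $C$ is serial iff every $C_\lambda$ is serial.

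Next, applying Theorem \ref{shapeserial} to each indecomposable summand $C_\lambda$, we conclude that $C$ is serial if and only if each connected component $(Q_{C_\lambda},d_{C_\lambda})$ of $(Q_C,d_C)$ is one of the listed valued quivers $_{\infty}\mathbb{A}_\infty$, $\mathbb{A}_\infty$, $_{\infty}\mathbb{A}$, $\mathbb{A}_n$ or $\widetilde{\mathbb{A}}_n$ (for some $n \geq 1$). By Proposition \ref{Gabvalued}, the same statement holds for the left valued Gabriel quiver since these families are all stable under taking the opposite quiver.

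The only step that requires any care is the identification of the connected components of $(Q_C,d_C)$ with the indecomposable block decomposition of $C$; once that is granted, the corollary is a purely formal consequence of Theorem \ref{shapeserial}. I do not anticipate a real obstacle, since the ingredients (block decomposition, connectedness of the Gabriel quiver of an indecomposable coalgebra, and the fact that seriality is a block-wise property) are all standard.
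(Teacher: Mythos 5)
Your proposal is correct and follows exactly the intended route: the paper states this corollary without proof precisely because it is the blockwise consequence of Theorem \ref{shapeserial}, obtained by decomposing $C$ into indecomposable components, observing that (right and left) seriality and the connected components of the valued Gabriel quiver are both determined block by block, and invoking the connectedness of the quiver of an indecomposable coalgebra from \cite{simson06}. No gaps.
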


The problem of describing explicitly right (and not left) serial
coalgebras by means of its valued Gabriel quiver turns out much more
difficult. Using  a reasoning similar to the proof of Theorem
\ref{shapeserial}, we now state, without proof, an approximation to
this question. By a  diagram
$$ \xymatrix@C=15pt@R=15pt{  &    &  \ar@{.}@/_41pt/[dd]  \\
              \circ \ar@{.>}[r]&    &      \\
              &   &         }
              $$
we mean a quiver which is a subquiver of an (infinite) tree with
descendent orientation, that is, a subquiver of a quiver of the
following shape:
$$\xymatrix@R=15pt@C=15pt{  &   &  &  & \circ \ar[rrrdd]\ar@{.>}[rrdd]
\ar@{.>}[rdd]\ar@{.>}[ldd]\ar@{.>}[lldd]   \ar[llldd] \ar[dd] &  & &   &  \\
   &   &   & &  & &    &   &   &  & \\
    &  \circ \ar@{.>}[l] \ar[d] \ar@{.>}[dl]   & \ar@{.}[r] &  &  \circ \ar[rrdd]\ar@{.>}[rdd]
     \ar@{.>}[dd] \ar@{.>}[ldd]\ar[lldd] &  \ar@{.}[r]&   &
  \circ \ar@{.>}[r] \ar[d] \ar@{.>}[dr]  & \\
     &  \circ \ar@{.>}[ld] \ar@{.>}[d]  &        &    &        &
     &  & \circ \ar@{.>}[rd] \ar@{.>}[d]   & \\
     &      &   \circ \ar@{.>}[rd] \ar@{.>}[d] \ar@{.>}[ld] & \ar@{.}[rr]  &   &
     &  \circ \ar@{.>}[rd] \ar@{.>}[d] \ar@{.>}[ld]&
     & \\
     &   &      &    &   &  &     &   &
    }$$

\begin{proposition}
A basic coalgebra $C$ is right serial if and only if its right
valued Gabriel quiver is one of the following:
\begin{enumerate}[$(a)$]
\item If $(Q_C,d_C)$ is acyclic, then $Q_C$ has the form

$$\xymatrix@C10pt{
  & \ar@{.}@/_27pt/[rr] &       &  &             & \ar@{.}@/_27pt/[rr]  &
       & &       &\ar@{.}@/_27pt/[rr]
   &     & & \\
 \ar@{.}[rr]& &  \circ \ar[rr]\ar@{.>}[u] & &
  \circ \ar[rr] \ar@{.>}[d]& & \circ\ar@{.>}[u]\ar[rr] \ar[rr]& & \circ
\ar[rr] \ar@{.>}[d]& & \circ \ar@{.>}[u]
\ar@{.}[rr]&  & \\
 &  &       &\ar@{.}@/^27pt/[rr]   &             &  &            & \ar@{.}@/^27pt/[rr] &
 & &   &   &  }$$
where each branch of the line may be finite or infinite.
  \item If $(Q_C,d_C)$ is not acyclic, then there exists a unique
  cycle in it, and $Q_C$ is of the form
$$
\xymatrix@C=15pt@R=15pt{
& & \ar@{.}@/_41pt/[rr]  &                 &   & \ar@{.}@/_41pt/[rr] &          & &  & \\
& &                      &                 &     & & & &  &\\
\ar@{.}@/^41pt/[dd]& & & \circ \ar[ld]\ar@{.>}[u]  &   & &   \circ \ar[lll]\ar@{.>}[u]& &  &  \ar@{.}@/_41pt/[dd]\\
& & \circ \ar[rd] \ar@{.>}[l]  &  & &   &   & \circ \ar[lu] \ar@{.>}[r]& & \\
& &    & \circ \ar@{.}[rrr] \ar@{.>}[d] & & & \circ \ar[ru] \ar@{.>}[d] & & &\\
& &                      &                 &     & & & &  &\\
& &   \ar@{.}@/^41pt/[rr] &                 &  &\ar@{.}@/^41pt/[rr]
& & & &
 }$$
\end{enumerate}
Moreover, each arrow $S_i\rightarrow S_j$ in $(Q_C,d_C)$ is
labelled by $(1,d_{ij})$, where $d_{ij}$ is a positive integer for
any $i$ and $j$ in $I_C$.
\end{proposition}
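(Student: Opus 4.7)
The plan is to use Proposition \ref{arrows} to reduce the statement to a purely combinatorial classification: $C$ is right serial precisely when every vertex of $(Q_C,d_C)$ is the sink of at most one arrow, and every such arrow carries a label of the form $(1,d)$. This already yields the ``Moreover'' clause of the proposition, so what remains is to describe, working one connected component of $(Q_C,d_C)$ at a time, the shape of a directed quiver in which every vertex has in-degree at most one. The two cases (a) and (b) will arise depending on whether cycles are absent or present.

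The direction $(\Leftarrow)$ will be a routine inspection. In shape (a), every vertex either sits at a spine endpoint (in-degree $0$) or has a unique predecessor coming from the previous vertex along the spine or from its parent in the attached descendant tree; shape (b) is the same with the spine replaced by an oriented cycle, whose vertices draw their unique predecessor from the cycle itself. Proposition \ref{arrows} then delivers right seriality.

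For $(\Rightarrow)$, I would fix a connected component $Q'$ and assume each vertex has at most one incoming arrow. In the acyclic case, pick a vertex $v_0 \in Q'$ and iterate the unique predecessor to produce a backward chain $\cdots \to v_{-2} \to v_{-1} \to v_0$, which either terminates at an in-degree-$0$ root or extends to infinity. Then extend forward by choosing one outgoing arrow at a time to obtain $v_0 \to v_1 \to \cdots$. Together these form the horizontal spine of picture (a). At each spine vertex $v_i$, the remaining outgoing arrows root subquivers which, being connected, acyclic, and with in-degree at most $1$ at every non-root vertex, are subquivers of infinite trees with descendant orientation, matching the diagram in (a).

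In the non-acyclic case, suppose $Q'$ contains a cycle $\gamma \colon v_0 \to v_1 \to \cdots \to v_{n-1} \to v_0$. Then each $v_i$ has its unique predecessor forced to be $v_{i-1\bmod n}$, so no arrow from outside $\gamma$ can enter $\gamma$. Uniqueness of $\gamma$ follows in two subcases: a second cycle $\gamma'$ sharing a vertex with $\gamma$ must agree with $\gamma$ on that vertex's unique predecessor, hence inductively $\gamma = \gamma'$; a vertex-disjoint $\gamma'$ lying in the same component, joined to $\gamma$ by a minimal undirected path $P$, is forced by the in-degree conditions at its two endpoints to be a directed path from $\gamma$ into $\gamma'$, whose terminus on $\gamma'$ then receives two distinct predecessors (its cycle-predecessor on $\gamma'$ and the penultimate vertex of $P$), a contradiction. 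All remaining vertices of $Q'$ organize themselves into descendant trees rooted on the cycle, yielding shape (b). I expect the subtlest step to be this uniqueness-of-cycle argument, together with the careful handling of the ``rootless'' infinite back-chains in the acyclic case.
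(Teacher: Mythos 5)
The paper states this proposition explicitly \emph{without proof} (``using a reasoning similar to the proof of Theorem \ref{shapeserial}, we now state, without proof, an approximation to this question''), so there is no in-paper argument to compare against; your proposal supplies exactly the route the authors intend, namely the reduction via Proposition \ref{arrows} to the purely combinatorial classification of quivers in which every vertex has in-degree at most one, with the ``Moreover'' clause immediate from that proposition. The converse and the acyclic case are fine: the spine is the canonical backward predecessor chain together with an arbitrarily chosen forward path, the underlying graph is a tree because any undirected cycle that is not a directed cycle would have a local sink of in-degree two, and the remaining outgoing arrows root descendant trees. Your uniqueness-of-cycle argument is also correct, though note that the orientation of the connecting path $P$ is forced by propagating the in-degree condition through \emph{all} interior vertices of $P$, not only at its two endpoints. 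The one step you assert rather than prove is that, in the non-acyclic case, every vertex of the component actually lies in a descendant tree rooted on the cycle: a priori the component could also contain a backward-infinite predecessor chain, or an in-degree-zero root, attached somewhere off the cycle. This is excluded by the same connectivity argument you already use for cycle uniqueness --- let $A$ be the set of vertices whose backward predecessor chain reaches the cycle; any edge joining $A$ to its complement, in either orientation, forces its outside endpoint into $A$, so $A$ is the whole component. With that supplied (and the statement read per connected component, as it tacitly assumes $Q_C$ connected), your proof is complete.
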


\section{Localization in serial coalgebras}\label{localizacion}

Let us now apply the localization techniques developed in
\cite{jmn2}, \cite{jmnr}, \cite{navarro} and \cite{simson07} to
(right) serial coalgebras. In particular, we give a characterization
of serial coalgebras by means of its ``local structure'', that is,
by means of
 its localized coalgebras.

The following proposition shows that the localization process
preserves the uniseriality of comodules and the seriality of
coalgebras.
 For each idempotent $e\in C^*$ we denote
 by $T_e$ the quotient functor $\Cohom_C(Ce,-):\M^C\rightarrow
 \M^{eCe}$.

\begin{proposition}\label{loc} Let $E=Ce$ be a quasi-finite injective
right $C$-comodule and $M$ a uniserial right $C$-comodule.
Then $T_e(M)=\Cohom_C(E,M)=eM$ is a uniserial right $eCe$-comodule.
\end{proposition}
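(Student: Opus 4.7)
The plan is to exploit the exactness of the quotient functor $T_e=e(-)$ to transfer uniseriality from $M$ to $eM$. The decisive step is a lifting property: every right $eCe$-subcomodule $L\subseteq eM$ has the form $eN$ for some right $C$-subcomodule $N\subseteq M$. Granted this, if $L_1,L_2\subseteq eM$ are $eCe$-subcomodules and $L_i=eN_i$ with $N_i\subseteq M$, then the uniseriality of $M$ gives (say) $N_1\subseteq N_2$, whence $L_1=eN_1\subseteq eN_2=L_2$; consequently the subcomodules of $eM$ form a chain and $eM$ is uniserial.

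To establish the lifting property, I would argue via the adjunction $T_e\dashv S_e$. Given $L\subseteq eM$, the left-exactness of $S_e$ produces an inclusion $S_e(L)\hookrightarrow S_eT_e(M)$, and I set $N=\eta_M^{-1}(S_e(L))\subseteq M$, where $\eta_M\colon M\to S_eT_e(M)$ is the unit. Since $T_eS_e\simeq 1_{\M^{eCe}}$, the counit $\epsilon$ is an isomorphism, and the triangle identity $\epsilon_{T_eM}\circ T_e\eta_M=\mathrm{id}_{T_eM}$ forces $T_e\eta_M$ to be an isomorphism as well. Applying the exact functor $T_e$ to the pullback defining $N$ and invoking the identification $T_eS_e(L)\simeq L$ then yields $T_e(N)=eN=L$ as subcomodules of $eM$. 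A more elementary variant takes $N$ to be the $C$-subcomodule of $M$ generated by $L$ as a subset; the inclusion $L\subseteq eN$ is immediate because $e^2=e$ gives $e\ell=\ell$ for $\ell\in L\subseteq eM$, while the reverse inclusion $eN\subseteq L$ follows by checking, via the coassociativity of $\rho_M$ together with $e\ell=\ell$, that $(ef)\cdot\ell\in L$ for every $f\in C^*$ and $\ell\in L$, i.e.\ that $eN\subseteq eC^*e\cdot L=L$ by the $eC^*e$-stability of $L$ afforded by its $eCe$-coaction.

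The main obstacle is the lifting property itself; both routes require some care, either in combining the triangle identities with the isomorphism $T_eS_e\simeq 1$, or in keeping track of how the $C^*$-action on $M$ restricts along the idempotent $e$ before one can identify it with the $eC^*e$-action on $eM$. Once the lifting is in place, the monotone assignment $N\mapsto eN$ collapses the chain of $C$-subcomodules of $M$ onto the chain of $eCe$-subcomodules of $eM$, delivering the claim.
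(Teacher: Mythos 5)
Your proof is correct, but it takes a genuinely different route from the paper. You reduce everything to a lattice-theoretic lifting property --- every $eCe$-subcomodule of $eM$ is $eN$ for some $C$-subcomodule $N\subseteq M$ --- which is indeed a standard feature of Gabriel's quotient functors, and both of your justifications work: the adjunction argument (exactness of $T_e$, left exactness of $S_e$, the isomorphism $T_eS_e\simeq 1$ and the triangle identity forcing $T_e\eta_M$ to be invertible) and the elementary one via the $eC^*e$-stability of $L$. Once subobjects lift, the chain condition transfers immediately, since $N_1\subseteq N_2$ implies $eN_1\subseteq eN_2$. The paper instead works directly with the Loewy filtration of $M$: it separates the composition factors into torsion and torsion-free ones, shows that $T_e$ kills the torsion layers and that $T_e(\Socle^{i_1}M)\subset T_e(\Socle^{i_2}M)\subset\cdots$ is precisely the Loewy series of $T_e(M)$ with simple successive quotients, and then invokes the characterization of uniseriality via the Loewy series being a composition series. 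Your argument is shorter and more conceptual, and it makes clear that the statement is really about quotient functors of Grothendieck categories rather than anything specific to coalgebras. What it does not deliver is the explicit identification of the Loewy series of $T_e(M)$ in terms of the torsion-free composition factors of $M$; the paper later relies on exactly this extra output (the proof of the Periodicity Theorem cites ``Proposition \ref{loc} and its proof'' to read off $\Socle^2\overline{E}_0/S_0$ and $\Socle^3\overline{E}_0/\Socle^2\overline{E}_0$), so if you adopted your proof you would need to supplement it with that computation elsewhere.
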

\begin{proof}
Let us consider the (composition) Loewy series of $M$ as right
$C$-comodule,
$$\Soc M=S_1\subset \Socle^2 M\subset \Socle^3 M\subset \cdots \subset M$$
whose composition factors are $S_1$ and $S_k=M[k]=\Socle^k
M/\Socle^{k-1} M$ for $k\geq 2$. Since a simple $C$-comodule is
either torsion or torsion-free, let us suppose that $S_{i_1}$,
$S_{i_2}$, $S_{i_3}, \ldots$ are the torsion-free composition
factors of $M$, where $i_1< i_2< i_3<\cdots$.

For each $k<i_1$, $T_e(M[k])=T_e(S_k)=0$ and then
$T_e(\Socle^kM)=T_e(S_1)=0$. As a consequence, by \cite[Remark
2.3]{jmnr}, $T_e(\Socle^{i_1}M)=T_e(S_{i_1})=S_{i_1}$. Moreover,
since $M[i_1]=\Socle(M/\Socle^{i_1-1}M)=S_{i_1}$, then
$M/\Socle^{i_1-1}M$ is torsion-free and, by \cite[Proposition
3.2(c)]{navarro},
$$S_{i_1}=T_e(S_{i_1})=T_e \left (\displaystyle\Socle \left (\frac{M}{\Socle^{i_1-1}M} \right ) \right )=
\displaystyle \Socle \left ( T_e\left (\frac{M}{\Socle^{i_1-1}M}
\right ) \right )=\Soc T_e(M).$$ Applying the same arguments, we
may obtain that $T_e(\Socle^k M)=S_{i_1}$ for each $i_1\leq k <
i_2$, and $M/\Socle^{i_2-1}M$ is a torsion-free right
$C$-comodule. Then
\begin{multline*} T_e(M)[2]=\displaystyle\frac{\Socle^2T_e(M)}{\Soc T_e(M)}=\Socle \left (
\displaystyle \frac{T_e(M)}{\Soc T_e(M)} \right )  =\Socle \left (
\displaystyle \frac{T_e(M)}{T_e(\Socle^{i_1}M)} \right ) \\ =\Socle
\left ( \displaystyle \frac{T_e(M)}{T_e(\Socle^{i_2-1}M)} \right )=
T_e \left (\displaystyle\Socle \left (\frac{M}{\Socle^{i_2-1}M}
\right ) \right )=S_{i_2}
\end{multline*}
Thus $\Socle^2 T_e(M)=T_e(\Socle^{i_2} M)$. If we continue in this
fashion, we may prove that
$$T_e(\Socle^{i_1} M)\subset T_e(\Socle^{i_2}M)\subset T_e( \Socle^{i_3}M)\subset \cdots \subset T_e(M)$$
is the Loewy series of $T_e(M)$. Hence $T_e(M)$ is uniserial as a
right $eCe$-comodule.
\end{proof}

\begin{corollary}\label{loc1} Let $C$ be a right (left) serial coalgebra and $e\in C^*$ an idempotent. Then
the localized coalgebra $eCe$ is right (left) serial.
\end{corollary}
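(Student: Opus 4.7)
The plan is to reduce the corollary directly to Proposition \ref{loc}. A basic coalgebra $D$ is right serial precisely when each indecomposable injective right $D$-comodule is uniserial, so I need only describe the indecomposable injective right $eCe$-comodules and check that each of them is uniserial.

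For the identification, I would invoke the dictionary recalled in the preliminaries: since $\M^{eCe}\simeq \M^C/\T_e$ under the quotient functor $T_e=e(-)$, and equivalences of categories preserve injective envelopes of simples, the indecomposable injective right $eCe$-comodules are exactly the objects $T_e(E_i)=eE_i$ with $i\in I_C$ such that $S_i\notin\T_e$, i.e., $eS_i\neq 0$. (The remaining $eE_j$ are zero because $E_j$ is then a $\T_e$-torsion comodule.) With this description in hand, the proof is immediate: $C$ being right serial means every $E_i$ is uniserial as a right $C$-comodule, so Proposition \ref{loc} applied to $M=E_i$ gives at once that each $eE_i$ is uniserial as a right $eCe$-comodule, whence $eCe$ is right serial. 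The left-serial case is handled symmetrically by applying the left-hand analogue of Proposition \ref{loc} to the indecomposable injective left $C$-comodules $F_i$.

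The only delicate point I anticipate is the quasi-finiteness hypothesis on $Ce$ built into the statement of Proposition \ref{loc}. If it turns out to be restrictive, I would sidestep it via the characterization of uniseriality by finite-dimensional subcomodules (the lemma in the introduction): it suffices to show every finite-dimensional subcomodule $N\subseteq eE_i$ is uniserial, and by exactness of $T_e$ together with the standard correspondence between subcomodules in $\M^{eCe}$ and $e$-saturated subcomodules in $\M^C$, $N$ lifts to a finite-dimensional subcomodule $N'\subseteq E_i$ with $eN'=N$. Since $N'$ is a subcomodule of the uniserial $E_i$ it is itself uniserial and finite-dimensional, so the argument of Proposition \ref{loc} applied to $N'$ (where no quasi-finiteness on $Ce$ is actually used, because all the socle series terms $\Socle^k N'$ are finite-dimensional) yields that $N=eN'$ is uniserial. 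This finite-dimensional reduction is the only step I expect could require additional bookkeeping; everything else is a direct appeal to the previous proposition.
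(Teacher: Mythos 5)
Your proposal is correct and follows essentially the same route as the paper: identify each indecomposable injective right $eCe$-comodule as $T_e(E_i)$ for an indecomposable injective right $C$-comodule $E_i$ (the paper cites \cite[Proposition 3.2]{navarro} for this) and then apply Proposition \ref{loc}. Your worry about the quasi-finiteness of $Ce$ is moot, since $C$ is assumed basic throughout, so $\Soc(Ce)$ is a direct sum of pairwise non-isomorphic simples and $Ce$ is automatically quasi-finite.
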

\begin{proof}
Let $\overline{E}_i$ be an indecomposable injective right
$eCe$-comodule. By \cite[Proposition 3.2]{navarro},
$T_e(E_i)=\overline{E}_i$, where $E_i$ is the indecomposable
injective right $C$-comodule such that
 $\Soc E_i=\Soc \overline{E}_i$. Since $E_i$ is uniserial, by Proposition \ref{loc}, so is $\overline{E}_i$.
\end{proof}

\begin{lemma}\label{lemmaloc2} Let $C$ be a coalgebra.
If the localized coalgebra $eCe$ is right (left) serial for each
idempotent $e\in C^*$ associated to a subset of simple comodules
with cardinal less or equal than three, then $C$ is right (left)
serial.
\end{lemma}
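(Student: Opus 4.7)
The plan is to argue by contraposition: if $C$ is not right serial, I will produce an idempotent $e\in C^*$ associated to at most three simples such that $eCe$ already fails to be right serial. By Lemma~\ref{zerosimple}, the failure of right seriality of $C$ yields an indecomposable injective right $C$-comodule $E_i$ for which $\Soc_C(E_i/S_i)$ is neither zero nor simple. Decomposing this semisimple socle, I extract two simple summands $S_j\oplus S_k\hookrightarrow \Soc_C(E_i/S_i)$ (allowing $j=k$: by Proposition~\ref{arrows} this case corresponds to a unique incoming arrow at $S_i$ of first-valuation at least two). Set $J=\{i,j,k\}$, so $|J|\leq 3$, and let $e\in C^*$ be the idempotent whose associated injective right $C$-comodule is $Ce=\bigoplus_{\ell\in J}E_\ell$.

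The second step transports this witness into $\M^{eCe}$ through the quotient functor $T_e=e(-)$, which is exact and satisfies $T_e(S_\ell)=S_\ell$ for $\ell\in J$ and $T_e(S_\ell)=0$ for $\ell\notin J$. As used in the proof of Corollary~\ref{loc1}, $\bar{E}_i:=T_e(E_i)$ is the injective envelope of $S_i$ in $\M^{eCe}$, and exactness of $T_e$ gives $\bar{E}_i/S_i=e(E_i/S_i)$. The image $e\cdot\Soc_C(E_i/S_i)$ is a semisimple $eCe$-subcomodule of $\bar{E}_i/S_i$, hence is contained in $\Soc_{eCe}(\bar{E}_i/S_i)$; since $j,k\in J$, the summands $S_j$ and $S_k$ survive localization, so this semisimple subcomodule contains $S_j\oplus S_k$.

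It then follows that $\Socle^2\bar{E}_i/\Soc\bar{E}_i=\Soc_{eCe}(\bar{E}_i/S_i)$ is neither zero nor simple, and applying Lemma~\ref{zerosimple} to $eCe$ shows that $eCe$ is not right serial, contradicting the hypothesis. The left-serial case is handled symmetrically, using the left-hand version of Lemma~\ref{zerosimple} together with Proposition~\ref{Gabvalued}. The main technical points are the identification of $T_e(E_i)$ with the injective envelope of $S_i$ in $\M^{eCe}$ and the fact that $T_e$ carries semisimple $C$-subcomodules to semisimple $eCe$-subcomodules; both follow from the exactness of $T_e$ and its explicit action on simples recalled in the preliminaries, so no ingredient beyond Lemma~\ref{zerosimple} is needed.
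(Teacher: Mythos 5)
Your proof is correct and takes essentially the same route as the paper's: argue by contraposition, use Lemma~\ref{zerosimple} to find an indecomposable injective $E$ with $S_j\oplus S_k\subseteq \Socle(E/S_i)$, localize at the idempotent associated to $\{S_i,S_j,S_k\}$, and observe that $S_j\oplus S_k$ survives inside $\Socle(\overline{E}/S_i)$, so $eCe$ is not right serial.
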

\begin{proof}
Let us suppose that $C$ is not right serial. By Lemma
\ref{zerosimple} there exists an indecomposable injective right
$C$-comodule $E$ such that $S_1\oplus S_2 \subseteq \Socle(E/S)$,
where $S_1$ and $S_2$ are simple comodules. Consider the
idempotent $e\in C^*$ associated to the set $\{S,S_1,S_2\}$. Then,
by \cite[Lemma 2.1]{navarro},
$$T_e(S_1\oplus S_2)=S_1\oplus S_2 \subseteq
T_e(\Socle(E/S))\subseteq \Socle(T(E/S))=\Socle(\overline{E}/S),$$
where $T_e(E)=\overline{E}$ is an indecomposable injective
$eCe$-comodule. Thus $eCe$ is not right serial.
\end{proof}
\begin{proposition}\label{equivserial} Let $C$ be a coalgebra. $C$ is right (left) serial if and
only if each socle-finite localized coalgebra of $C$ is right (left)
serial.
\end{proposition}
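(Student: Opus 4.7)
The plan is to establish the equivalence by combining the two preceding results, Corollary \ref{loc1} and Lemma \ref{lemmaloc2}, so that very little new work is needed.

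For the ``only if'' direction, I would simply invoke Corollary \ref{loc1}. That corollary applies to \emph{every} idempotent $e \in C^*$ without any socle-finiteness restriction, so in particular every socle-finite localization $eCe$ of a right (resp. left) serial coalgebra $C$ is itself right (resp. left) serial. This direction requires no additional argument.

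For the ``if'' direction, the key observation is that Lemma \ref{lemmaloc2} already reduces the problem to checking localizations at idempotents $e$ associated to subsets of simple comodules of cardinality at most three. For such an idempotent, the associated injective $Ce$ has socle a finite direct sum of at most three simples, so the coalgebra $eCe$ has finite-dimensional socle, i.e.\ $eCe$ is socle-finite. Therefore the hypothesis ``every socle-finite localization of $C$ is right (left) serial'' covers in particular all the idempotents that Lemma \ref{lemmaloc2} requires. Applying that lemma then yields that $C$ itself is right (left) serial.

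The main (minor) obstacle is really just a bookkeeping point: one has to verify that an idempotent $e \in C^*$ attached to a set of simples of size at most three does in fact produce a socle-finite coalgebra $eCe$. This follows from the description of the socle of $eCe$ via the functor $T_e$ (as recorded in \cite[Lemma 2.1]{navarro} and used in the proof of Lemma \ref{lemmaloc2}), namely $\Soc(eCe) = T_e(\Soc C) = \bigoplus_{j \in J} S_j$ where $J$ indexes the chosen simples. Since there are only finitely many summands, each of finite dimension, $eCe$ is socle-finite, and the ``if'' direction goes through. The left-sided version is obtained by the symmetric argument.
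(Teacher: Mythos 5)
Your proof is correct and follows exactly the paper's own argument, which likewise combines Corollary \ref{loc1} (for the forward direction) with Lemma \ref{lemmaloc2} (for the converse). The only addition is your explicit check that localizations at sets of at most three simples are socle-finite, which is a sound and worthwhile clarification of a step the paper leaves implicit.
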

\begin{proof} Apply Corollary \ref{loc1} and Lemma \ref{lemmaloc2}.
\end{proof}

\begin{remark} The subsets of simple comodules mentioned in Lemma \ref{lemmaloc2} cannot have cardinal bounded by less than three.
For instance, if $C$ is the path coalgebra $KQ$ of the quiver $Q$
$$\xymatrix@R10pt{ \point{1} \ar[rd] & \\
  &   \point{3}\\ \point{2} \ar[ru] }$$
  then each localized coalgebra of $C$ at subsets of two or one simple comodules is right (and left) serial
  but clearly $C$ is not.
  \end{remark}

The former results are quite surprising since, as the following
proposition shows, the localization process increases the label of
an arrow (if exists) between two
torsion-free vertices.

\begin{proposition}\label{proplabels}
Let $C$ be a coalgebra and $e\in C^*$ idempotent. Let $S_1$ and
$S_2$ be two torsion-free simple $C$-comodules in the torsion
theory associated to the localizing subcategory $\T_e$. If there
exists an arrow $S_1\rightarrow S_2$ in $(Q_C,d_C)$ labelled by
$(d'_{12},d''_{12})$, then there exists an arrow $S_1\rightarrow
S_2$ in $(Q_{eCe},d_{eCe})$ labelled by $(t'_{12},t''_{12})$,
where $t'_{12}\geq d'_{12}$ and $t''_{12}\geq d''_{12}$.
\end{proposition}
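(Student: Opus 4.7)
The plan is to construct an explicit $K$-linear injection
\[
\Ext^{1}_{C}(S_{1},S_{2})\hookrightarrow\Ext^{1}_{eCe}(S_{1},S_{2})
\]
and derive the two inequalities by dividing $K$-dimensions by $\dim_{K}G_{1}$ and $\dim_{K}G_{2}$. The starting point is the classical identification
\[
\Ext^{1}_{C}(S_{1},S_{2})\cong\Hom_{C}(S_{1},E_{2}/S_{2})\cong\Hom_{C}(S_{1},\Soc(E_{2}/S_{2})),
\]
obtained from the injective presentation $0\to S_{2}\to E_{2}\to E_{2}/S_{2}\to 0$ together with the fact that every morphism from the simple comodule $S_{1}$ factors through the socle of its target. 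By definition, decomposing $\Soc(E_{2}/S_{2})=\bigoplus_{k\in I_{C}}S_{k}^{n_{k}}$ gives $n_{1}=d'_{12}$.

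Next I transport the same description to $\M^{eCe}$. Since $S_{2}$ is torsion-free, $T_{e}(S_{2})=S_{2}$, and by \cite[Proposition 3.2]{navarro} the comodule $\overline{E}_{2}:=T_{e}(E_{2})$ is the injective envelope of $S_{2}$ in $\M^{eCe}$. Exactness of $T_{e}$ applied to the presentation above then yields $T_{e}(E_{2}/S_{2})=\overline{E}_{2}/S_{2}$, hence
\[
\Ext^{1}_{eCe}(S_{1},S_{2})\cong\Hom_{eCe}(S_{1},\Soc(\overline{E}_{2}/S_{2})).
\]
Applying $T_{e}$ to $\Soc(E_{2}/S_{2})\hookrightarrow E_{2}/S_{2}$ and using that $T_{e}$ kills torsion simples and fixes torsion-free ones gives a subcomodule
\[
T_{e}(\Soc(E_{2}/S_{2}))=\bigoplus_{S_{k}\text{ torsion-free}}S_{k}^{n_{k}}\subseteq\Soc(\overline{E}_{2}/S_{2}),
\]
the last inclusion holding because the source is semisimple. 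Since $S_{1}$ is torsion-free, $S_{1}^{n_{1}}$ is a direct summand of $T_{e}(\Soc(E_{2}/S_{2}))$.

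The composite
\[
\Hom_{C}(S_{1},\Soc(E_{2}/S_{2}))\stackrel{T_{e}}{\longrightarrow}\Hom_{eCe}(S_{1},T_{e}(\Soc(E_{2}/S_{2})))\hookrightarrow\Hom_{eCe}(S_{1},\Soc(\overline{E}_{2}/S_{2}))
\]
is then the desired $K$-linear injection: the first arrow is an isomorphism, since both source and target decompose as $G_{1}^{n_{1}}$ (the only summands contributing to either Hom are copies of $S_{1}$, and the torsion summands killed by $T_{e}$ already contributed zero); the second arrow is injective by left-exactness of $\Hom_{eCe}(S_{1},-)$. In particular $\Ext^{1}_{eCe}(S_{1},S_{2})\neq 0$, so there is indeed an arrow $S_{1}\to S_{2}$ in $(Q_{eCe},d_{eCe})$. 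Using the standard fact from \cite{navarro} and \cite{jmnr} that $\End_{C}(S_{i})=\End_{eCe}(S_{i})=G_{i}$ for torsion-free simples, dividing the resulting $K$-dimension inequality by $\dim_{K}G_{1}$ (resp.\ $\dim_{K}G_{2}$) delivers $t'_{12}\geq d'_{12}$ (resp.\ $t''_{12}\geq d''_{12}$).

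The main obstacle is not conceptual but notational: I must be careful to cite the identification $T_{e}(E_{2})=\overline{E}_{2}$ as an injective envelope in $\M^{eCe}$, to observe that the division $K$-algebras $G_{1}$ and $G_{2}$ attached to torsion-free simples are the same whether computed in $\M^{C}$ or $\M^{eCe}$, and to verify that $T_{e}$ is an isomorphism on Hom-sets between torsion-free semisimple targets. Once these standard facts from the localization theory of \cite{navarro,jmnr} are invoked, the Ext comparison reduces to the socle-of-injective-cokernel reformulation and routine linear algebra.
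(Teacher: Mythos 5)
Your proof is correct and follows essentially the same route as the paper: identify $\Ext^1_C(S_1,S_2)$ with $\Hom_C(S_1,\Soc(E_2/S_2))$, decompose the socle as $\bigoplus_k S_k^{n_k}$ with $n_1=d'_{12}$, and use exactness of $T_e$ together with $T_e(\Soc(E_2/S_2))\subseteq \Soc(\overline{E}_2/S_2)$ to compare dimensions. The only (harmless) divergence is at the second component of the label, where you divide the $K$-dimension inequality by $\dim_K G_2$ while the paper invokes the left-hand version of the argument together with Proposition \ref{Gabvalued}; both are valid.
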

\begin{proof}
Let us suppose that $\Socle(E_2/S_2)=\oplus_{i\in I_C} S_i^{n_i}$
for some non-negative integers $n_i$. Then there exists an arrow
$S_i\rightarrow S_2$ if and only if $n_i\neq 0$ and, furthermore,
in such a case, it is labelled by $(n_i,m_i)$ for some positive
integer $m_i$. Now, if there exists an arrow $S_1\rightarrow S_2$
in $(Q_C,d_C)$ labelled by $(d'_{12},d''_{12})$, then
$$S_1^{n_1}\subseteq \oplus_{i\in I_e}
S_i^{n_i}=T_e(\Socle(E_2/S_2))\subseteq \Socle T(E_2/S_2)\subseteq
\overline{E}_2/S_2,$$ since $S_1$ and $S_2$ are torsion-free. Hence
there is an arrow $$\xymatrix{S_1 \ar[rr]^-{(t'_{12},t''_{12})} &
&S_2}$$ in $(Q_{eCe},d_{eCe})$ and $t'_{12}=\dim_{G_1}
\Hom_{eCe}(S_1,\overline{E}_2/S_2)\geq n_1=d'_{12}$. By the
left-hand
 version of this reasoning and Proposition \ref{Gabvalued}, also
 $t''_{12}\geq d''_{12}$.
\end{proof}
\begin{remark} It is not possible to prove the equalities on the
components of the labels in the statement of Proposition
\ref{proplabels}. The reader only have to consider the path
coalgebra $KQ$ of the quiver $Q$
$$\xymatrix@R=15pt@C=15pt{ & \point{2} \ar[rd] & \\ \point{1} \ar[rr] \ar[ru] & &
\point{3}}$$ and the idempotent $e\in (KQ)^*$ associated to the
subset $\{1, 3\}$.
\end{remark}

\section{Hom-computable and representation-directed serial
coalgebras}\label{hom-comp}

 The study of the directing modules of an artin algebra comes from different motivations.
 On the one hand, they are treated as a generalization of the modules lying in a postprojective
 or a preinjective component (or more generally,
 in an acyclic component) of the Auslander-Reiten quiver of this algebra. Hence,
 they possess common properties with these modules as, for instance, they are determined up to isomorphism
 by their composition factors. On the other hand, they have interesting properties of their own as,
 for example, that any algebra having a sincere and directing
  module is a tilted algebra (that is, the endomorphism algebra
   of a hereditary algebra). It is also well-known that a
    representation-directed algebra (all its modules are directing)
     is finite representation-type, see \cite{simsonbook1} for details.
      This section deals with representation-directed coalgebras as
       defined in \cite{simson07b}. In particular, we describe the
        representation-directed serial coalgebras following the ideas
         of the previous sections, i.e., by means of their valued Gabriel
          quiver and using the localization in categories of comodules.
           In order to do this, we shall make use of the so-called
            computable comodules and Hom-computable coalgebras.

Assume that $C$ is a basic coalgebra with fixed decompositions
(\ref{decomp1}) and (\ref{decomp2}). Following \cite{simson07b}, a
right $C$-comodule $M$ is defined to be \emph{computable} if, for
each $i\in I_C$, the sum $\ell_i(M)=\sum_{n=1}^{\infty}
\ell_i(M[n])$, called the composition $S_i$-length of $M$, is
finite, where $M[n]=\Socle^n M/\Socle^{n-1}M$ and $\ell_i(M[n])$ is
the number of times that the simple comodule $S_i$ appears as a
summand in a semisimple decomposition of $M[n]$. We denote by
$\comp^C$ (resp. by $^C\comp$) the full subcategory of $\M^C$ (resp.
of $^C\M$) whose objects are computable comodules. The coalgebra $C$
is said to be \emph{Hom-computable} if every indecomposable
injective right $C$-comodule is computable or, equivalently (cf.
\cite{simson07b}), if $\Hom_C(E_i,E_j)$ has finite $K$-dimension for
any two indecomposable injective right $C$-comodules $E_i$ and
$E_j$. Therefore, by the duality $D:\inj^C\rightarrow {^C}\inj$
stated in \cite{chin2}, the notion of Hom-computability is
left-right symmetric. We now describe Hom-computable serial
coalgebras. For that purpose we give a version for coalgebras of the
Periodicity Theorem proved by Eisenbud and Griffith in
\cite{eisenbud1}.

\begin{lemma}\label{lemilla} Let $C$ be a coalgebra and $N$ a uniserial right $C$-comodule.
 If $M$ is a subcomodule of $N$ then
$M$ is uniserial and, moreover, $\Socle^t M=\Socle^t N$ for any
positive integer $t$ such that $\Socle^t M\neq \Socle^{t-1} M$. As a
consequence, $M$ is uniserial if and only if every subcomodule of
$M$ is uniserial.
\end{lemma}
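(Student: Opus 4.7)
The plan is to split the statement into three parts: (i) $M$ is uniserial, (ii) the equality $\Socle^t M=\Socle^t N$ under the stated hypothesis, and (iii) the final ``as a consequence''. Part (i) is immediate from the definition of uniseriality in terms of the subcomodule lattice: the subcomodules of $M$ are exactly the subcomodules of $N$ contained in $M$, so they form an interval in the chain of subcomodules of $N$, hence themselves a chain. Part (iii) is formal once (i) is in hand: the implication $(\Leftarrow)$ is trivial on taking the subcomodule $M$ itself, and $(\Rightarrow)$ amounts to applying (i) with $N:=M$ to each subcomodule of $M$.

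The core is part (ii), which I would establish by induction on $t$. For the base case $t=1$ the hypothesis reads $\Soc M\neq 0$, i.e.\ $M\neq 0$. I would then compare the two subcomodules $M$ and $\Soc N$ of the uniserial $N$, which are therefore comparable. If $M\subseteq \Soc N$, then simplicity of $\Soc N$ forces $M=\Soc N$ and hence $\Soc M=M=\Soc N$. Otherwise $\Soc N\subseteq M$, so $\Soc N$ is a simple subcomodule of $M$; but, by (i), $M$ is uniserial and nonzero, so $\Soc M$ is itself simple and the equality $\Soc M=\Soc N$ follows.

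For the inductive step, assume $\Socle^{t+1}M\neq \Socle^t M$. A preliminary observation is that this forces $\Socle^t M\neq \Socle^{t-1}M$, since if these coincided then $M/\Socle^{t-1}M$ would have zero socle and hence be zero, collapsing the entire Loewy filtration of $M$. The inductive hypothesis therefore gives $\Socle^t M=\Socle^t N$. Since $M$ is uniserial, the quotient $M/\Socle^t M$ is a nonzero uniserial comodule, whose socle is consequently simple, so $\Socle^{t+1}M/\Socle^t M$ is simple. The same argument for $N$ applies: the inclusion $\Socle^{t+1}M\supsetneq \Socle^t M=\Socle^t N$ inside $N$ shows $N\neq \Socle^t N$, hence $\Socle^{t+1}N\neq \Socle^t N$, and then $\Socle^{t+1}N/\Socle^t N$ is simple too. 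Finally, $\Socle^{t+1}M$ and $\Socle^{t+1}N$ are comparable subcomodules of $N$, each a proper extension of $\Socle^t M=\Socle^t N$ with simple quotient; a strict inclusion between them would produce a proper nonzero subcomodule of a simple comodule, a contradiction. Hence they coincide.

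The only mildly delicate point I foresee is the bookkeeping in the inductive step: verifying that the condition $\Socle^{t+1}M\neq \Socle^t M$ propagates backwards so that the inductive hypothesis is indeed available, and propagates across to $N$ so that the simple-socle argument can be applied to $N$ as well. Once these two points are in place, the remainder reduces to a formal comparison inside the totally ordered subcomodule lattice of $N$.
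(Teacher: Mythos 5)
Your proposal is correct and follows essentially the same route as the paper: an induction on the Loewy filtration, comparing the layer $\Socle^{t}M/\Socle^{t-1}M$ with the simple layer $\Socle^{t}N/\Socle^{t-1}N$ inside the uniserial comodule $N$. The only (harmless) difference is organizational: you establish uniseriality of $M$ up front by the lattice argument, whereas the paper extracts it at the end from the fact that the Loewy series of $M$ is shown to be a composition series.
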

\begin{proof}
Obviously, if $M\leq N$ then $\Soc M=\Soc N=S$ is a simple comodule.
Let us now assume that $\Socle^k M=\Socle^k N$ for each $k\leq t-1$,
and also $\Socle^{t-1} M\neq \Socle^t M$. Then
$$0\neq \displaystyle\frac{\Socle^t M}{\Socle^{t-1}M}\leq
\frac{\Socle^t N}{\Socle^{t-1} N}\cong S_t,$$ where $S_t$ is a
simple comodule. That is, $\Socle^t M/\Socle^{t-1} M\cong S_t$. Thus
$M$ is uniserial and, by its definition, $\Socle^t M=\Socle^t N$.
\end{proof}

\begin{proposition}[Periodicity Theorem] Let $C$ be an indecomposable serial
coalgebra and $E_0$ an indecomposable injective right $C$-comodule.
Let the sequence of composition factors of $E_0$ be $S_0=E_0[1]=\Soc
E$, $S_1=E_0[2]$, $S_2=E_0[3], \ldots$. Suppose that $S_1\cong S_k$
for some $k>1$, and let $h\neq 1$ be the smallest such integer. Then
the valued Gabriel quiver of $C$ is $\widetilde{\mathbb{A}}_{h}$,
and $S_m\cong S_n$ if and only if $m\equiv n (\text{mod $h$})$. If
there is no such an $h$, then $S_m\cong S_n$ implies $m=n$.
\end{proposition}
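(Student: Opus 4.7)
My plan would exploit the two-sided combinatorial rigidity of $(Q_C, d_C)$: by Proposition \ref{arrows} each vertex is the sink of at most one arrow, and by Proposition \ref{Gabvalued} together with the left-hand version of Proposition \ref{arrows} each vertex is also the source of at most one arrow. The composition factors of $E_0$ therefore trace a walk in $Q_C$ against the arrow direction, and the global shape of $Q_C$ is controlled by the classification in Theorem \ref{shapeserial}.

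The first step I would carry out is to establish the combinatorial dictionary: for every $i \geq 0$, the simple $S_{i+1}$ is the unique predecessor of $S_i$ in $Q_C$. The quotient $E_0/\Socle^i E_0$ is uniserial with socle $S_i$, and hence embeds in the indecomposable injective $E(S_i)$. By Lemma \ref{lemilla} its socle layers coincide with those of $E(S_i)$, so the simple $E_0[i+2] \cong S_{i+1}$ agrees with $\Socle^2 E(S_i)/\Soc E(S_i)$, which by Proposition \ref{arrows} is exactly the source of the unique arrow ending at $S_i$.

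With this dictionary in place, I would handle the two cases by bookkeeping. Suppose $S_m \cong S_n$ with $m < n$ and set $d = n - m$. Uniqueness of the predecessor gives $S_{m+1} \cong S_{n+1}$, and iterating forward yields $S_j \cong S_{j+d}$ for every $j \geq m$. The dual uniqueness of successors then propagates the relation backwards: whenever $j \geq 1$, isomorphic simples share their unique outgoing arrow, so $S_{j-1} \cong S_{j-1+d}$. Pushing down to $j = 0$ and back one step gives $S_1 \cong S_{1+d}$. When $h$ is the minimal index greater than $1$ with $S_1 \cong S_h$, applying the previous argument with $d = h - 1$ together with minimality of $h$ pins down the exact period of the sequence, forcing the composition factors to cycle through a full loop in $Q_C$. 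Indecomposability of $C$ combined with Theorem \ref{shapeserial} then identifies $Q_C$ with a cyclic valued quiver $\widetilde{\mathbb{A}}_n$ of appropriate length, and each arrow of $Q_C$ is labelled $(1,1)$ as already shown in Theorem \ref{shapeserial}. Conversely, if no such $h$ exists, the same propagation shows that all the $S_m$ are pairwise distinct, so $Q_C$ is forced to be one of the acyclic shapes in Theorem \ref{shapeserial}.

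The main obstacle would be the dictionary step: identifying $S_{i+1}$ with the unique predecessor of $S_i$ for every $i \geq 0$ requires a careful application of Lemma \ref{lemilla} to the successive quotients $E_0/\Socle^i E_0$, keeping track of how the socle filtration of $E_0$ matches that of $E(S_i)$ layer by layer. Once this bookkeeping is in place, the periodicity and uniqueness claims fall out directly from the ``at most one arrow in, at most one arrow out'' property at every vertex of $Q_C$.
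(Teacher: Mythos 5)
Your proof is correct in substance, but it follows a genuinely different route from the paper's at the crucial uniqueness step. Where you argue purely combinatorially — using Lemma \ref{lemilla} applied to the quotients $E_0/\Socle^i E_0\hookrightarrow E(S_i)$ to identify $S_{i+1}$ with the unique predecessor of $S_i$, and then propagating an isomorphism $S_m\cong S_n$ forwards via uniqueness of predecessors and backwards via uniqueness of successors (the latter resting on left seriality through Proposition \ref{Gabvalued}) — the paper instead localizes: it takes $e=e_0+e_n$, uses Proposition \ref{loc} to see that $\overline{E}_0=T_e(E_0)$ is uniserial with consecutive layers $S_0,S_n,S_n,\dots$, and reads off a loop at $S_n$ together with an arrow $S_n\to S_0$ in $(Q_{eCe},d_{eCe})$, contradicting Corollary \ref{loc1} and Theorem \ref{shapeserial}. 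Your backward propagation reaches the same contradiction (an isomorphism $S_0\cong S_{n-m}$, hence a repetition earlier than the assumed first one, or a violation of $S_1\ncong S_k$) without invoking the localization machinery of Section \ref{localizacion}, which makes the argument more elementary and self-contained; the paper's version buys a technique that is reused elsewhere (e.g.\ in Proposition \ref{representationdirected}). For the periodicity itself both arguments coincide: a cycle in $(Q_C,d_C)$ forces $\widetilde{\mathbb{A}}$-shape by Theorem \ref{shapeserial}, and Lemma \ref{lemilla} applied to $E_0/\Socle^d E_0\hookrightarrow E_0$ gives $S_{l+d}\cong S_l$. One caveat you share with the paper: both propagation arguments tacitly assume the composition sequence does not terminate (i.e.\ that the relevant layers $S_{j+1}$, $S_{n+1}$ are nonzero when they are matched against nonzero ones); if $E_0$ is finite dimensional this needs to be said explicitly, but since the published proof glosses over the same point I would not count it against you.
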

\begin{proof} Let us assume that  $S_0\ncong S_h$ for any $h>1$.
Suppose also that $S_n\cong S_m$ for some $m\neq n$ and,
furthermore, this is the first repetition, i.e., $S_i\ncong S_j$ for
$i,j<m$. Let us consider the injective right $C$-comodule
$E=E_0\oplus E_n=Ce$, where $e=e_0+e_n$. Then, by Proposition
\ref{loc} and its proof, $eCe$ is serial, $T_e(E_0)=\overline{E}_0$
is uniserial and its Loewy series is
$$S_0\subseteq T_e(\Socle^n E_0)\subseteq T_e(\Socle^m E_0)\subseteq
\cdots \subseteq \overline{E}_0,$$ where $\Socle^2
\overline{E}_0/S_0\cong S_n$ and $\Socle^3 \overline{E}_0/\Socle^2
\overline{E}_0\cong S_n$. Let now $M=\overline{E}_0/\Socle^2
\overline{E}_0\leq \overline{E}_n/S_n$. Then
$$S_n\cong\Socle^3 \overline{E}_0/\Socle^2 \overline{E}_0=\Soc
M\subseteq \Socle(\overline{E}_n/S_n).$$ Thus there is a loop in the
vertex $S_n$ of the valued Gabriel quiver of $eCe$, $Q_e$. In
addition, $\Socle^2 \overline{E}_0/S_0\cong S_n$, so $Q_e$ contains
the subquiver

$$\xymatrix@R=40pt{ S_n \ar[r] \ar@(ur,ul)[] & S_0}.$$
By Theorem \ref{shapeserial}, $eCe$ is not serial. Hence $m$ must
equal $n$.

 Suppose now that $S_h\cong S_0$
for some $h\neq 1$ and, moreover, it is the smallest such integer.
First, by \cite[Theorem 1.9]{navarro}, there is a path in
$(Q_C,d_C)$ of length $h$ starting and ending at $S_0$, i.e., there
is a cycle in $(Q_C,d_C)$. By Theorem \ref{shapeserial},
$(Q_C,d_C)=\widetilde{\mathbb{A}}_{h}$. By a reasoning similar to
the one done above, we may prove that $S_i\ncong S_j$ for any
$i,j<h$ with $i\neq j$. Therefore it remains to show that
$S_{l+h}\cong S_l$ for any $l>1$. We denote by $M$ the right
comodule $E_0/\Socle^h E_0\leq E_0$. Then, for any $l>0$
$$S_{l+h}\cong \displaystyle\frac{\Socle^{l+h+1}E_0}{\Socle^{l+h}E_0}\cong
\displaystyle\frac{\frac{\Socle^{l+h+1}E_0}{\Socle^h E_0}}{
\frac{\Socle^{l+h}E_0}{\Socle^h E_0}}\overset{\blacktriangle}{\cong}
\displaystyle\frac{
\Socle^{l+1}M}{\Socle^lM}\overset{\blacktriangledown}{\cong}
\displaystyle\frac{\Socle^{l+1}E_0}{ \Socle^{l}E_0}\cong S_l,$$
where in $\blacktriangle$ we use \cite[Lemma 1.4]{navarro} and in
$\blacktriangledown$ we use Lemma \ref{lemilla}.
\end{proof}

\begin{proposition} \label{serialcomputable}
Let $C$ be an indecomposable serial coalgebra. $C$ is Hom-computable
if and only if one of the following conditions holds:
\begin{enumerate}[$(a)$]
\item The right valued Gabriel quiver of $C$ are either $
_{\infty}\mathbb{A}_\infty$, or $\mathbb{A}_\infty$, or
$_{\infty}\mathbb{A}$, or $\mathbb{A}_n$ for some $n\geq 1$.
\item The right valued Gabriel quiver of $C$ is
$\widetilde{\mathbb{A}}_n$ for some $n\geq 1$, and $C$ is finite
dimensional.
\end{enumerate}
\end{proposition}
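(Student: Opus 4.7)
The plan is to invoke the classification given by Theorem~\ref{shapeserial}, which asserts that $(Q_C,d_C)$ is one of the five valued quivers listed there, and to argue separately in the four acyclic cases (a)--(d) and in the cyclic case (e) $\widetilde{\mathbb{A}}_n$. In the acyclic cases I shall show that Hom-computability is automatic, and in the cyclic case I shall show that it is equivalent to $C$ being finite-dimensional.

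Suppose first that $(Q_C,d_C)$ is acyclic. Fix an indecomposable injective right $C$-comodule $E_i$. Being serial, $E_i$ is uniserial, so its Loewy series coincides with a composition series and each factor $E_i[n] = \Socle^n E_i/\Socle^{n-1}E_i$ is a \emph{single} simple comodule. The proof of Proposition~\ref{arrows} shows, moreover, that the label $(1,d)$ on the unique arrow ending at a given vertex forces $\Soc(E_j/S_j) = S_k$ (not a higher power of $S_k$), where $S_k\rightarrow S_j$ is that arrow. Applying this inductively to the injective envelopes of the quotients $E_i/\Socle^{n-1}E_i$, one sees that the sequence $S_i = E_i[1], E_i[2], E_i[3], \ldots$ traces out the unique backward path in $Q_C$ starting at $S_i$. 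Since $Q_C$ is acyclic in cases (a)--(d), these simples are pairwise non-isomorphic, whence $\ell_j(E_i)\in\{0,1\}$ for every $j\in I_C$, so $E_i$ is computable and $C$ is Hom-computable.

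Now suppose $(Q_C,d_C)=\widetilde{\mathbb{A}}_n$. If $C$ is finite-dimensional then every indecomposable injective $E_i$ is finite-dimensional and computability is obvious. Conversely, assume $C$ is Hom-computable and fix $E_i$. The Periodicity Theorem proved above yields $E_i[m+n]\cong E_i[m]$ for every $m\geq 1$; were $E_i$ infinite-dimensional, each simple $S_j$ on the cycle would appear as a composition factor infinitely often, forcing $\ell_j(E_i)=\infty$ and contradicting Hom-computability. Hence every $E_i$ is finite-dimensional, and since $\widetilde{\mathbb{A}}_n$ has only $n$ vertices, the decomposition $C=\bigoplus_{i\in I_C}E_i$ exhibits $C$ as a finite direct sum of finite-dimensional comodules, so $C$ itself is finite-dimensional.

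The main obstacle is the direction ``Hom-computable implies finite-dimensional'' in case (e), which depends essentially on the Periodicity Theorem. In the acyclic cases the only delicate point is to ensure that each Loewy layer $E_i[n]$ is a \emph{single} simple rather than a direct sum with multiplicity; this is exactly what the $(1,d)$-form of the labels in Proposition~\ref{arrows} (combined, when needed, with its left-hand analogue) guarantees.
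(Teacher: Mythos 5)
Your proof is correct, and its overall skeleton (reduce to the five quivers of Theorem~\ref{shapeserial}, then split into the acyclic cases and the $\widetilde{\mathbb{A}}_n$ case) matches the paper's. The interesting difference is where the Periodicity Theorem gets used. The paper invokes the Periodicity Theorem for the \emph{forward} direction (acyclic quiver $\Rightarrow$ each $\ell_j(E_i)\in\{0,1\}$ $\Rightarrow$ Hom-computable), and for the converse it simply cites an external result of Simson (socle-finite, infinite-dimensional with quiver $\widetilde{\mathbb{A}}_n$ $\Rightarrow$ not Hom-computable). You do the opposite: in the acyclic cases you give a direct combinatorial argument -- the Loewy layers of a uniserial injective are single simples, by the $(1,d)$-labels of Proposition~\ref{arrows}, and they trace a backward path in $Q_C$, which cannot revisit a vertex when $Q_C$ is acyclic -- and you reserve the Periodicity Theorem for the converse, deducing that an infinite-dimensional $E_i$ over a coalgebra of type $\widetilde{\mathbb{A}}_n$ has every cycle vertex as a composition factor infinitely often, so $\ell_j(E_i)=\infty$. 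This makes the converse self-contained where the paper outsources it, which is a genuine gain. Two small points of precision: the unconditional assertion ``the Periodicity Theorem yields $E_i[m+n]\cong E_i[m]$ for every $m\geq 1$'' should be stated only under the assumption that $E_i$ is infinite-dimensional (for finite Loewy length the right-hand layer may vanish), and you should note explicitly that the hypothesis of the Periodicity Theorem (some repetition among the composition factors) is forced in that situation by the pigeonhole principle, since there are infinitely many nonzero layers but only $n$ simples; the ``no repetition'' alternative of the theorem is then impossible. Neither point is a gap, just a line each to add.
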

\begin{proof} Let us assume that $C$ verifies either the condition
$(a)$ or the condition $(b)$. First, if $C$ is finite dimensional
then $\Hom_C(E_i,E_j)$ is finite dimensional for each pair of
indecomposable injective comodules. Now, if the valued Gabriel
quiver of $C$ is either $\mathbb{A}_\infty$, or
$_{\infty}\mathbb{A}_\infty$, or $_{\infty}\mathbb{A}$, or
$\mathbb{A}_n$ for some $n\geq 1$; then, by the Periodicity Theorem,
for each indecomposable injective $E$ and each $i\in I_C$,
$\ell_i(E)$ is one or zero. Then $C$ is Hom-computable.

Conversely, it is enough to prove that if $(Q_C,d_C)=
\widetilde{\mathbb{A}}_n$ for some $n\geq 1$, and $C$ is infinite
dimensional, then $C$ is not Hom-computable. Now, since $C$ is
socle-finite, this is a consequence of \cite[Corollary
2.10]{simson06}.
\end{proof}

Following \cite{simson07}, we say that a finitely copresented
indecomposable comodule $M$ is said to be directing if there is no
chain
$$\xymatrix{M \ar[r]^-{f_1} & M_1 \ar[r]^-{f_2} & \ar@{.}[r] &
\ar[r]^-{f_{n}} & M_n \ar[r]^-{f_{n+1}} & M}$$ where $M_i$ is
finitely copresented and indecomposable for any $i=1,\ldots ,n$ and
$f_j$ is a non-zero non-isomorphism for any $j=1,\ldots, n+1$. A
coalgebra is said to be right (left) representation-directed if each
finitely copresented indecomposable right (left) comodule is
directing. Let us now classify serial representation-directed
coalgebras in terms of their valued Gabriel quiver:

\begin{proposition}\label{representationdirected}
Let $C$ be an indecomposable serial coalgebra. The following
statements are equivalent:
\begin{enumerate}[$(a)$]
\item[$(a)$] $C$ is right representation-directed.
\item[$(a')$] $C$ is left representation-directed.
\item[$(b)$] The right valued Gabriel quiver of $C$ are either $
_{\infty}\mathbb{A}_\infty$, or $\mathbb{A}_\infty$, or
$_{\infty}\mathbb{A}$, or $\mathbb{A}_n$ for some $n\geq 1$.
\item[$(b')$] The left valued Gabriel quiver of $C$ are either $
_{\infty}\mathbb{A}_\infty$, or $\mathbb{A}_\infty$, or $
_{\infty}\mathbb{A}$, or $\mathbb{A}_n$ for some $n\geq 1$.
\end{enumerate}
\end{proposition}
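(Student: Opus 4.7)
The proof breaks into the three equivalences $(b) \Leftrightarrow (b')$, $(a) \Leftrightarrow (a')$, and $(b) \Leftrightarrow (a)$, with the classification in Theorem \ref{shapeserial} sharply narrowing the cases.

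The equivalence $(b) \Leftrightarrow (b')$ is immediate from Proposition \ref{Gabvalued}: passing to the opposite valued quiver leaves each listed quiver in the family ($_{\infty}\mathbb{A}_\infty$ and $\mathbb{A}_n$ are self-opposite, while $\mathbb{A}_\infty$ and $_{\infty}\mathbb{A}$ interchange). For $(a) \Leftrightarrow (a')$, one extends the duality $D:\inj^C\to{^C}\inj$ of \cite{chin2} to a duality between the full subcategories of finitely copresented right and left $C$-comodules, by applying $D$ to a copresentation $0\to M\to E_0\to E_1$ with $E_0,E_1$ finite sums of indecomposable injectives. A chain $M\to M_1\to\cdots\to M_n\to M$ of non-zero non-isomorphisms in $\M^C$ is sent by $D$ to a chain $DM\to DM_n\to\cdots\to DM_1\to DM$ of the same kind in ${^C}\M$, so the representation-directed property is symmetric.

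For $(b)\Rightarrow(a)$, suppose $(Q_C,d_C)$ is one of the acyclic (line) quivers. By seriality, every indecomposable finitely copresented right $C$-comodule is uniserial, of the form $\Socle^k E_i$ for some $i\in I_C$ and some $k\geq 1$ (allowing $k=\infty$, i.e.\ $M=E_i$, when $E_i$ has infinite Loewy length). Since the Loewy series of each $E_i$ traces out a path in the acyclic line $Q_C$, with pairwise distinct simple composition factors, the canonical factorisation $M\twoheadrightarrow f(M)\hookrightarrow N$ of any non-zero morphism $f:M\to N$ of such uniserials forces the socle index to weakly advance in a single direction along the line, and to advance strictly unless $M\hookrightarrow N$ is a subcomodule inclusion with larger Loewy length on the right. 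In particular, $\End_C(M)$ is a division ring for every such $M$, so any cycle $M_0\to M_1\to\cdots\to M_r=M_0$ of non-zero non-isomorphisms would force the socle indices to be constant and the Loewy lengths to be strictly increasing, contradicting $M_0=M_r$.

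For $(a)\Rightarrow(b)$, Theorem \ref{shapeserial} reduces the analysis to the case $(Q_C,d_C)=\widetilde{\mathbb{A}}_n$. Label the vertices cyclically $S_0,\ldots,S_{n-1}$ with arrows $S_{i-1}\to S_i$ modulo $n$, and put $U_i=\Socle^2 E_i$. By Proposition \ref{arrows}, $U_i$ is uniserial with socle $S_i$ and top $S_{i-1}$; both $S_i$ and $U_i$ are finitely copresented (the uniserial quotients $E_i/S_i$ and $E_i/U_i$ each embed into an indecomposable injective) and indecomposable. The inclusions $\iota_i:S_i\hookrightarrow U_i$ and the quotient maps $\pi_i:U_i\twoheadrightarrow S_{i-1}$ are non-zero non-isomorphisms, and their concatenation
$$S_0\xrightarrow{\iota_0} U_0\xrightarrow{\pi_0} S_{n-1}\xrightarrow{\iota_{n-1}} U_{n-1}\xrightarrow{\pi_{n-1}}\cdots\xrightarrow{\iota_1} U_1\xrightarrow{\pi_1} S_0$$
is a cycle of non-zero non-isomorphisms, so $S_0$ is not directing and $C$ is not representation-directed. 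The most technical step is the morphism analysis in $(b)\Rightarrow(a)$: one must ensure that the Loewy-theoretic bookkeeping on morphisms remains valid for the infinite branches $_{\infty}\mathbb{A}_\infty$, $\mathbb{A}_\infty$, $_{\infty}\mathbb{A}$, where the $E_i$ themselves may be infinite-dimensional yet still trivially finitely copresented.
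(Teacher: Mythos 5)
Your treatment of the crown case is essentially the paper's: the cycle $S_0\to\Socle^2E_0\to S_{n-1}\to\cdots\to\Socle^2E_1\to S_0$ built from the canonical inclusions and projections is exactly the chain the authors use to show that $\widetilde{\mathbb{A}}_n$ excludes right representation-directedness, and your reduction of $(b)\Leftrightarrow(b')$ to Proposition \ref{Gabvalued} is also theirs. (Your separate duality argument for $(a)\Leftrightarrow(a')$ is redundant --- once $(a)\Leftrightarrow(b)$, $(b)\Leftrightarrow(b')$ and the symmetric $(a')\Leftrightarrow(b')$ are available, $(a)\Leftrightarrow(a')$ is automatic --- and it is also the least safe part of what you wrote, since $\Cohom_C(-,C)$ is only known to give a duality on socle-finite injectives, not on all finitely copresented comodules.)

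The genuine divergence, and the genuine gap, is in $(b)\Rightarrow(a)$. The paper does not argue directly with morphisms between uniserials: it first observes (Proposition \ref{serialcomputable}) that in the line cases $C$ is Hom-computable, then invokes \cite[Proposition 2.13(c) and Lemma 6.4]{simson07b} to reduce representation-directedness to that of the socle-finite localized coalgebras, and finally uses Proposition \ref{equivserial} and the Periodicity Theorem to see that each such localization has valued quiver $\mathbb{A}_m$, hence is finite dimensional and representation-directed. Your direct argument instead rests on the claim that \emph{every} indecomposable finitely copresented right $C$-comodule is uniserial of the form $\Socle^kE_i$ (allowing $k=\infty$). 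That classification is established in the paper only for \emph{finite-dimensional} indecomposables (Theorem \ref{fincomod}(a), quoting \cite{cuadra-serial}); in the line cases the $E_i$, and hence potential finitely copresented indecomposables, may be infinite dimensional, and the directing condition quantifies over all finitely copresented indecomposables. A finitely copresented comodule is a kernel of a map between finite direct sums of the uniserial injectives $E_i$, and showing that such a kernel decomposes into uniserials is a real (Nakayama-style) argument that you have not supplied. Granting that classification, your bookkeeping --- the image of a non-zero map between such uniserials is simultaneously a quotient of the source and a subcomodule $\Socle^mE_j$ of the target, so along an acyclic line the socle vertex weakly advances and any cycle forces all maps to be isomorphisms --- is correct and yields a more self-contained proof than the paper's appeal to \cite{simson07b}. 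So either supply a proof (or reference) for the structure of infinite-dimensional finitely copresented indecomposables over a serial coalgebra, or follow the paper and localize down to finite-dimensional coalgebras where the classification is known.
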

\begin{proof}
Since, by Proposition \ref{Gabvalued}, the left valued Gabriel
quiver of $C$ is the opposite to the right one, it is enough to
prove that $C$ is right representation-directed if and only if
$(Q_C,d_C)$ is one of the above valued quivers.

Let us assume that $(Q_C,d_C)=\widetilde{\mathbb{A}}_n$ for some
$n\geq 1$. We use the following labels in the vertices (we omit the
labels of the arrows):
$$\xymatrix@C=20pt@R=5pt{       & \point{4} \ar[ld]  &
 \point{3} \ar[l] &   \point{2} \ar[l]&    \\
 \point{5} \ar[rd]  &  &    &   & \point{1} \ar[lu]   \\
 & \point{6} \ar[r] &\point{7} \ar@{.}[r] &\point{n} \ar[ru] &  }$$
Then we may consider the chain
$$\xymatrix{S_1\ar[r]^-{inc} & \Socle^2E_1 \ar[r]^-{p} & S_n
\ar[r]^-{inc} & \ar@{.}[r]& \ar[r]^-{p} & S_2 \ar[r]^-{inc} &
\Socle^2 E_2 \ar[r]^-{p} & S_1}$$ where $p$ is the projection of
$\Socle^2 E_i$ onto $\Socle^2 E_i/S_i\cong S_{i-1}$ (or $S_n$ if
$i=1$), and $inc$ is the inclusion of $S_i$ in $\Socle^2E_i$. We
recall that, for each $i=1,\ldots ,n$, $S_i\neq \Socle^2 E_i$ since
$S_i$ is not an isolated point without loops in $Q_C$. Thus the
morphisms $inc$ and $p$ are not isomorphisms. Finally, $\Socle^2
E_i$ is indecomposable and, since $C$ is right serial, finitely
cogenerated. Thus $C$ is not right representation-directed.

Let us now suppose that $Q_C$ is one of the remaining quivers.
By Proposition \ref{serialcomputable}, $C$
is Hom-computable and then, by \cite[Proposition 2.13(c) and Lemma 6.4]{simson07b},
 $C$ is right representation-directed
if and only if  each socle-finite localized coalgebra of $C$ is
right representation-directed. For each finite subset
$\mathfrak{S}\subseteq \{S_i\}_{i\in _C}$, by Proposition
\ref{equivserial},
 the localized coalgebra $D$ associated to
$\mathfrak{S}$ is socle-finite, serial and Hom-computable. Therefore
 its valued Gabriel quiver is either $\mathbb{A}_n$
for some $n\geq 1$, or $\mathbb{\widetilde{A}}_m$ for some $m\geq
1$. Nevertheless, by the Periodicity Theorem, the second case is not
possible and then $(Q_D,d_D)$ is $\mathbb{A}_n$ for some $n\geq 1$.
Then $D$ is finite-dimensional and right representation-directed.
\end{proof}

\section{Finite dimensional comodules over serial
coalgebras}\label{finitedimensional}

This section is devoted to give a complete list of all
indecomposable finite-dimensional right comodules over a serial
coalgebra and a description of the Auslander-Reiten quiver of the
category $\M^C_f$. We recall from \cite{cuadra-serial} that any
finite dimensional indecomposable comodule $M$ over a serial
coalgebra $C$ is uniserial, and then, there exists an integer $t\geq
1$ such that $\Socle^t M=M$. Thus $M$ is a right $D$-comodule, where
$D$ is the subcoalgebra of $C$, $\Socle^t C=\oplus_{i\in I_C}
\Socle^t E_i$ (and then it is serial, cf. \cite{cuadra-serial}). We
refer the reader to \cite{chin2}, \cite{justus}, \cite{simsonnowak}
and \cite{simson1} for definitions and terminology concerning almost
split sequence and the Auslander-Reiten quiver of a coalgebra, see
also \cite{simsonbook1}.

\begin{theorem}\label{fincomod}
Let $C$ be a serial coalgebra. The following statements hold:
\begin{enumerate}[$(a)$]
\item Each finite dimensional indecomposable right $C$-comodule is
isomorphic to $\Socle^k E$ for some positive integer $k$ and some
indecomposable injective right $C$-comodule.
\item The category of finite dimensional right $C$-comodules has
almost split sequences. Furthermore, for each indecomposable
non-injective right $C$-comodule $\Socle^k E$, the almost split
sequence starting on this comodule is
$$\xymatrix{0 \ar[r] & \Socle^k E \ar[r]^-{\binom{i}{p}} & \Socle^{k+1}E\oplus
\displaystyle\frac{\Socle^kE}{\Soc E} \ar[r]^-{(q\hspace{0.05cm}
-j)} & \displaystyle\frac{\Socle^{k+1}E}{\Soc E} \ar[r] & 0},$$
where $i$ and $j$ are the standard inclusions and $p$ and $q$ are
the standard projections.
\end{enumerate}
\end{theorem}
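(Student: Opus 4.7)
The plan is to prove (a) and (b) separately, exploiting the uniseriality of finite-dimensional indecomposable comodules over a serial coalgebra together with Lemma \ref{lemilla}.

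For (a), let $M$ be a finite-dimensional indecomposable right $C$-comodule. By the result from \cite{cuadra-serial} recalled at the start of the section, $M$ is uniserial, so $\Soc M$ is simple, say $\Soc M = S_i$. The composition of $\Soc M \hookrightarrow M$ with the inclusion $S_i \hookrightarrow E_i$ extends, by injectivity of $E_i$, to a homomorphism $M \to E_i$; this is a monomorphism because any nonzero subcomodule of the uniserial $M$ contains $\Soc M$. Now apply Lemma \ref{lemilla} with $N = E_i$: if $t$ denotes the Loewy length of $M$, then $M = \Socle^t M = \Socle^t E_i$.

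For (b), I would first verify by a direct diagram chase that the proposed sequence is short exact. The composition $(q,-j) \circ \binom{i}{p}$ sends $x \in \Socle^k E$ to $(x+\Soc E)-(x+\Soc E)=0$; $\binom{i}{p}$ is injective since $i$ is; $(q,-j)$ is surjective since $q$ is; and exactness in the middle comes from the observation that if $q(y)=j(\overline z)$ with $y\in \Socle^{k+1}E$ and $\overline z\in \Socle^k E/\Soc E$, then $y$ and any lift $z$ of $\overline z$ differ by an element of $\Soc E\subseteq \Socle^k E$, whence $y\in \Socle^k E$ and $(y,\overline z)=\binom{i}{p}(y)$. Non-splitness then follows from Krull--Schmidt in $\M^C_f$: non-injectivity of $\Socle^k E$ forces $\Socle^{k+1}E\supsetneq \Socle^k E$, so $\Socle^{k+1}E$ is uniserial of Loewy length exactly $k+1$, a length not attained by either $\Socle^k E$ or $\Socle^{k+1}E/\Soc E$ (each of Loewy length $k$); hence no direct sum decomposition of the middle term can match the two sides of a hypothetical splitting.

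The main obstacle is to show that the sequence is almost split. I would use the equivalent characterization: it suffices to check that every non-retraction $h : X \to \Socle^{k+1} E/\Soc E$ with $X$ finite-dimensional indecomposable lifts through $(q,-j)$. By (a), $X \cong \Socle^m F$ for some indecomposable injective $F$ and some $m \geq 1$, and both $X$ and $\Socle^{k+1}E/\Soc E$ are uniserial, so the image of $h$ is uniserial; either it is contained in the subcomodule $j(\Socle^k E/\Soc E)$ of $\Socle^{k+1}E/\Soc E$ (in which case $h$ lifts trivially through the second summand of the middle term) or it surjects onto $\Socle^{k+1}E/\Soc E$. In the latter case, the failure of $h$ to be a retraction together with the uniserial shape of $X$ and the arrow structure of $(Q_C,d_C)$ given by Theorem \ref{shapeserial} forces the obstruction in $\Ext^1_C(X,\Soc E)$ to lifting $h$ through $q$ to vanish, yielding the required factorization. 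Existence of almost split sequences in $\M^C_f$ then follows from the general results of \cite{chin2, simson1, simsonnowak} applied to the Krull--Schmidt category $\M^C_f$, whose indecomposables are now explicitly parameterized by (a).
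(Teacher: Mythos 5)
Your part (a) coincides with the paper's argument: uniseriality from \cite{cuadra-serial}, embedding into the injective envelope of the (simple) socle, and Lemma \ref{lemilla}. The exactness check and the Krull--Schmidt/Loewy-length argument for non-splitness in (b) are also fine. The problem is precisely the step you yourself flag as the main obstacle: verifying the almost split property. The paper proves that $\binom{i}{p}$ is \emph{left} almost split, where the only nontrivial case (an injective non-isomorphism $h:\Socle^k E\to N$) is settled cleanly: by (a), $N\cong\Socle^t E$ with $t>k$, and $\Socle^t E$ is injective as a comodule over the subcoalgebra $\Socle^t C$, so $h$ extends along the inclusion $i:\Socle^kE\hookrightarrow\Socle^{k+1}E$. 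You instead try to prove that $(q\ -j)$ is \emph{right} almost split, and in the crucial surjective case you simply assert that ``the obstruction in $\Ext^1_C(X,\Soc E)$ vanishes'' because of uniseriality and the quiver structure. That is not a proof, and the dual route is genuinely harder: writing $X\cong\Socle^mF$ and $\ker h=\Socle^{m-k}X$ with $m>k$, one must (i) show that $X/\Socle^{m-k-1}X\cong\Socle^{k+1}E$, which already requires identifying its socle with $\Soc E$ using the fact that each vertex of $(Q_C,d_C)$ is the source of at most one arrow, and then (ii) show that the induced surjection onto $\Socle^{k+1}E/\Soc E$ agrees with $q$ not merely up to an automorphism of the target, i.e.\ that the correcting automorphism lifts along $q$. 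Neither point is addressed, and (ii) in particular does not follow from injectivity of $\Socle^{k+1}E$, which extends maps \emph{into} it, not out of its quotients.

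The economical fix is to switch to the left almost split property, as the paper does following \cite[Theorem 4.1]{simsonbook1}: for a non-isomorphism $h:\Socle^kE\to N$ with $N$ finite dimensional indecomposable, either $h$ is not injective, hence kills $\Soc E$ by uniseriality and factors through $p$, or $h$ is injective, hence $N\cong\Socle^tE$ with $t>k$ and $h$ extends through $i$ by injectivity of $N$ over $\Socle^tC$. Combined with indecomposability of the end terms and non-splitness, \cite[Chapter IV, Theorem 1.13]{simsonbook1} then yields that the sequence is almost split; this also gives the existence claim directly, without appealing to further general results.
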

\begin{proof} $(a)$. Let $M$ be a finite dimensional indecomposable comodule,
 by \cite{cuadra-serial}, $M$ is uniserial and then $M=\Socle^tM$
for some $t>0$ (we may consider the minimal one). Since $M$ has
simple socle, its injective envelope is an indecomposable injective
comodule $E$. By Lemma \ref{lemilla}, the Loewy series of $M$ and
$E$ are the same until the step $t$.

$(b)$. Here we essentially follow the proof of \cite[Theorem
4.1]{simsonbook1}. The given sequence is easily seen to be exact.
By Krull-Remak-Schmidt-Azumaya Theorem,
 it is not
split and, since $C$ is right serial, it has indecomposable end
terms. Let us prove that the homomorphism $f=\binom{i}{p}$ is left
almost split. It is clear that f is not a section. Let $N$ be an
indecomposable finite dimensional $C$-comodule, and
$h:\Socle^kE\rightarrow N$ be a non-isomorphism. We have two cases.
If $h$ is not injective, it decomposes through $\Socle^kE/\Soc E$,
namely $h=h'p$. But then the homomorphism $g=(0 \hspace{0.1cm} h')$
satisfies that $g f=h$. If, on the other hand, $h$ is injective,
since it is not an isomorphism, by (a), $N\cong\Socle^t E$ with $t>
k$. Then $N$ is injective as right $\Socle^t C$-comodule and there
exists a morphism $h':\Socle^{k+1} E\rightarrow N$ such that $h=h'
i$. Hence $(h' \hspace{0.1cm} 0)f=h$. Since the left term and the
right term are indecomposable comodules and $f$ is a left almost
split morphism, the sequence is almost split in the category of
finite dimensional $C$-comodules, see \cite[Chapter IV, Theorem
1.13]{simsonbook1}.
\end{proof}

By applying Theorem \ref{fincomod}, we can easily calculate the
Auslander-Reiten quiver of a serial coalgebra. For example, we do it
for hereditary serial path coalgebras.

\vspace{0.2cm}

\noindent \textbf{Type $_{\infty}\mathbb{A}_\infty$}. Let $Q$ be the
quiver
$$\xy \xymatrix@C=30pt@R=10pt{\cdots
\ar[r]^-{\alpha_{-2}} & \point{\text{\tiny -2}} \ar[r]^-{\alpha_{-1}}& \point{\text{\tiny -1}}
\ar[r]^-{\alpha_{0}} & \point{0} \ar[r]^-{\alpha_1} & \point{1} \ar[r]^-{\alpha_2} &
\point{2} \ar[r]^-{\alpha_3}& \cdots }
\endxy$$ and let $C=KQ$ be the path coalgebra of $Q$. By Theorem
\ref{shapeserial}, $C$ is serial. Let $E_i$ be the indecomposable
injective right $C$-comodule associated to the vertex $i$, that is,
$E_i=Ke_i\bigoplus \left (\bigoplus_{t\geq 0}
K\alpha_i\cdots\alpha_{i-t}\right )$, where $e_i$ is the stationary
path at $i$. Let
$$S^k_i=\Socle^kE_i=Ke_i\bigoplus \left (\bigoplus^{k-1}_{t=0}
K\alpha_i\cdots\alpha_{i-t}\right ).$$ Now, since $\Socle^kE_i/\Soc
E_i\cong \Socle^{k-1}E_{i-1}$ for any $k$ and $i$, the almost split
sequences are the following:
$$\xymatrix{0 \ar[r] & S^k_i \ar[r] & S^{k+1}_i\oplus
S^{k-1}_{i-1} \ar[r] & S^k_{i-1} \ar[r] & 0},$$ for each $k\geq 0$
and each $i\in \mathbb{Z}$. Therefore, the Auslander-Reiten quiver
of $C$ is the following.
$$\xymatrix@C=17pt@R=17pt{  \ar@{.}[rd]& & \ar@{.}[rd]  & & \ar@{.}[rd]
  &   & \ar@{.}[rd] &  & \ar@{.}[rd] &  & \\
      \ar@{.}[r]  &  S^4_2 \ar@{.}[ru] \ar[rd] &
      & S^4_1 \ar@{.}[ru] \ar[rd] \ar@{-->}[ll]&
      &  S^4_0  \ar@{.}[ru]\ar[rd]\ar@{-->}[ll]&
      &  S^4_{-1}\ar@{-->}[ll] \ar[rd]\ar@{.}[ru] &
& S^4_{-2} \ar@{.}[ru]\ar@{.}[rd] \ar@{-->}[ll] &
     \ar@{.}[l]          \\
     \ar@{.}[ru]\ar@{.}[rd] \ar@{.}[rr] &     &
       S^3_1 \ar[ru]\ar[rd] &   &
         S^3_0 \ar[ru]\ar[rd]\ar@{-->}[ll] &
             & S^3_{-1} \ar[ru]\ar[rd] \ar@{-->}[ll]&
                 &  S^3_{-2} \ar[ru]\ar[rd]\ar@{-->}[ll] &
& \ar@{.}[ll]\\
     \ar@{.}[r]  &  S^2_1 \ar[ru] \ar[rd]&
     & S^2_0 \ar[ru]\ar[rd] \ar@{-->}[ll]&
     &  S^2_{-1} \ar[ru]\ar[rd]\ar@{-->}[ll] &
        & S^2_{-2} \ar[ru]\ar[rd]\ar@{-->}[ll]  &
    &  S^2_{-3}\ar@{.}[ru]\ar@{.}[rd] \ar@{-->}[ll] &
        \ar@{.}[l]     \\
     \ar@{.}[ru]\ar@{.}[rd]  \ar@{.}[rr]&
     &  S^1_0 \ar[ru] \ar[rd]&
     & S^1_{-1} \ar[ru] \ar[rd]\ar@{-->}[ll] &
         &  S^1_{-2} \ar@{-->}[ll]\ar[ru]\ar[rd]&
             & S^1_{-3}\ar@{-->}[ll] \ar[ru]\ar[rd]  &
& \ar@{.}[ll]  \\
     \ar@{.}[r] &  S^0_0 \ar[ru] &
     & S^0_{-1}  \ar[ru]\ar@{-->}[ll] &
        & S^0_{-2} \ar[ru] \ar@{-->}[ll] &
          & S^0_{-3} \ar[ru]\ar@{-->}[ll]  &
    & S^0_{-4} \ar@{.}[ru] \ar@{-->}[ll] &  \ar@{.}[l]      }$$
where each dashed arrow $\xymatrix{ Y & X\ar@{-->}[l]}$ means that
$Y=\tau (X)$, where $\tau=\text{DTr}$ is the Auslander-Reiten
translation, see \cite{chin2}
 for definitions and details.

\hspace{0.2cm}

\noindent \textbf{Type $_{\infty}\mathbb{A}$}. Let $Q$ be the quiver
$$\xy \xymatrix@C=30pt@R=10pt{\cdots
\ar[r]^-{\alpha_{4}} & \point{4} \ar[r]^-{\alpha_{3}}& \point{3}
\ar[r]^-{\alpha_{2}} & \point{2} \ar[r]^-{\alpha_1} & \point{1} \ar[r]^-{\alpha_0} &
\point{0}}
\endxy$$ and let $C=KQ$ be the path coalgebra of $Q$. Again, by Theorem
\ref{shapeserial}, $C$ is serial. Let $E_i$ be the indecomposable
injective right $C$-comodule associated to the vertex $i$, that is,
$E_i=Ke_i\bigoplus \left (\bigoplus_{t\geq 0}
K\alpha_i\cdots\alpha_{i-t} \right )$, where $e_i$ is the stationary
path at $i$. Let
$$S^k_i=\Socle^kE_i=Ke_i\bigoplus \left (\bigoplus^{k-1}_{t=0}
K\alpha_i\cdots\alpha_{i-t}\right ).$$ Since $\Socle^kE_i/\Soc
E_i\cong \Socle^{k-1}E_{i+1}$ for any $k$ and $i$, the almost split
sequences are the following:
$$\xymatrix{0 \ar[r] & S^k_i \ar[r] & S^{k+1}_i\oplus
S^{k-1}_{i+1} \ar[r] & S^k_{i+1} \ar[r] & 0},$$ for each $i, k\geq
0$. Therefore, the Auslander-Reiten quiver of $C$ is the following.
$$\xymatrix@C=17pt@R=17pt{  & & &
  &   & \ar@{.}[rd] &  & \ar@{.}[rd] &  & \\
      &
      & &
      &  S^4_0  \ar@{.}[ru]\ar[rd]&
      &  S^4_{1}\ar@{--}[ll] \ar[rd]\ar@{.}[ru] &
& S^4_{2} \ar@{.}[ru]\ar@{.}[rd] \ar@{-->}[ll] &
     \ar@{.}[l]          \\
        &
        &   &
         S^3_0 \ar[ru]\ar[rd]&
             & S^3_{1} \ar[ru]\ar[rd] \ar@{-->}[ll]&
                 &  S^3_{2} \ar[ru]\ar[rd]\ar@{-->}[ll] &
& \ar@{.}[ll]\\
       &
     & S^2_0 \ar[ru]\ar[rd] &
     &  S^2_{1} \ar[ru]\ar[rd]\ar@{-->}[ll] &
        & S^2_{2} \ar[ru]\ar[rd]\ar@{-->}[ll]  &
    &  S^2_{3}\ar@{.}[ru]\ar@{.}[rd] \ar@{-->}[ll] &
        \ar@{.}[l]     \\
     &  S^1_0 \ar[ru] \ar[rd]&
     & S^1_{1} \ar[ru] \ar[rd]\ar@{-->}[ll] &
         &  S^1_{2} \ar@{-->}[ll]\ar[ru]\ar[rd]&
             & S^1_{3}\ar@{-->}[ll] \ar[ru]\ar[rd]  &
& \ar@{.}[ll]  \\
       S^0_0 \ar[ru] &
     & S^0_{1}  \ar[ru]\ar@{-->}[ll] &
        & S^0_{2} \ar[ru] \ar@{-->}[ll] &
          & S^0_{3} \ar[ru]\ar@{-->}[ll]  &
    & S^0_{4} \ar@{.}[ru] \ar@{-->}[ll] &  \ar@{.}[l]      }$$

\vspace{0.2cm}

\noindent \textbf{Type $\widetilde{\mathbb{A}}_n$, $n\geq 1$}. Let
$Q$ be the quiver
$$\xy \xymatrix@C=20pt@R=10pt{      & \point{1} \ar[ld]_-{\alpha_n}
 & \point{2} \ar[l]_-{\alpha_1} & \point{3}\ar[l]_-{\alpha_2}  &   &  \\
 \point{n} \ar@{.>}[rd]  &  &    &   & \point{4} \ar[lu]_-{\alpha_3}  & \text{$n$ vertices} \\
  & \point{7} \ar[r]_-{\alpha_6} & \point{6} \ar[r]_-{\alpha_5} & \point{5} \ar[ru]_{\alpha_{4}} &  &
 }

\endxy$$ and let $C=KQ$ be the path coalgebra of $Q$. Clearly, $C$ is serial. Let $E_i$ be the
indecomposable injective right $C$-comodule associated to the vertex
$i$, that is, $E_i=Ke_i\bigoplus \left (\bigoplus_{t\geq 0}
K\alpha_{[i]}\cdots\alpha_{[i-t]} \right )$ where $[p]\equiv p
\text{ (mod $n$)}$ for any $p> 0$. Let
$$S^k_i=\Socle^kE_i=Ke_i\bigoplus \left (\bigoplus^{k-1}_{t=0}
K\alpha_{[i]}\cdots\alpha_{[i-t]}\right ).$$

Now, since $\Socle^kE_n/\Soc E_n\cong \Socle^{k-1}E_{1}$ and
$\Socle^kE_i/\Soc E_i\cong \Socle^{k-1}E_{i+1}$ for each $i=1,\ldots,
n-1$, for any $k\geq 0$, the almost split sequences are the
following:
$$\xymatrix{0 \ar[r] & S^k_i \ar[r] & S^{k+1}_i\oplus
S^{k-1}_{i+1} \ar[r] & S^k_{i+1} \ar[r] & 0},$$ for each
$i=1,\ldots, n-1$ and $k\geq 0$; and
$$\xymatrix{0 \ar[r] & S^k_n \ar[r] & S^{k+1}_n\oplus
S^{k-1}_{1} \ar[r] & S^k_{1} \ar[r] & 0},$$ for any $k\geq 0$.
 Therefore, the Auslander-Reiten
quiver of $C$ is the following.
$$\xymatrix@C=10pt@R=17pt{
          S^0_1 \ar[rd]\ar@{=}[dd] &
      & S^0_2  \ar[rd] \ar@{-->}[ll]&
      & S^0_3  \ar[rd]\ar@{-->}[ll]&
      &  \ar@{.}[ll]  &
& S^0_n \ar[rd] \ar@{.}[ll] & &
     S^0_1   \ar@{-->}[ll]  \ar@{=}[dd]     \\
         &
       S^1_1 \ar[ru]\ar[rd] &   &
         S^1_2 \ar[ru]\ar[rd]\ar@{-->}[ll] &
             & S^1_3  \ar@{-->}[ll]&
                 &    &
&  S^1_n\ar@{.}[llll]  \ar[ru] \ar[rd] &\\
        S^2_n \ar[ru] \ar[rd]\ar@{=}[dd]&
     & S^2_1 \ar[ru]\ar[rd] \ar@{-->}[ll]&
     &  S^2_2 \ar[ru]\ar[rd]\ar@{-->}[ll] &
        &    &
    &  S^2_{n-1}\ar[ru]\ar[rd] \ar@{.}[llll] &
         &   S^2_n \ar@{-->}[ll] \ar@{=}[dd]\\
     &  S^3_n \ar[ru] \ar[rd]&
     & S^3_{1} \ar[ru] \ar[rd]\ar@{-->}[ll] &
         &  S^3_{2} \ar@{-->}[ll]&
             &    &
& S^3_{n-1} \ar@{.}[llll] \ar[ru]\ar[rd] &  \\
       S^4_{n-1} \ar[ru] \ar@{.}[rd] \ar@2{.}[d]&
     & S^4_{n}  \ar[ru]\ar@{-->}[ll] \ar@{.}[rd]&
        & S^4_{1} \ar[ru] \ar@{-->}[ll] \ar@{.}[rd]&
          &  &
    & S^4_{n-2} \ar[ru] \ar@{.}[llll]\ar@{.}[rd] &    & S^4_{n-1} \ar@{-->}[ll] \ar@2{.}[d] \\
 & \ar@{.}[ru] & & \ar@{.}[ru] & & & & & &\ar@{.}[ru]
 & }$$
 This structure is called a stable tube of rank $n$ since the shape
 of the quiver obtained is a tube if we identify the vertical double lines.

\vspace{0.2cm}

The Auslander-Reiten quiver of the remaining serial path coalgebras
are described in \cite{simsonnowak}.

\begin{remark} Observe that, for each indecomposable finite-dimensional comodule
$M=\Socle^nE$, $\tau(M)=\Socle^{k+1} E/\Soc E$, and then $\length M=\length \tau(M)$.
That is, the comodules lying in the same $\tau$-orbit has the same length.
\end{remark}

\section{A theorem of Eisenbud and Griffith for
coalgebras}\label{EisenbudGriffith}

We finish the paper by a version  of the theorem of Eisenbud and
Griffith \cite[Corollary 3.2]{eisenbud1} for coalgebras. We recall
that this theorem asserts that every proper quotient of a hereditary
noetherian prime ring is serial. Obviously, first we need a
translation of the concepts from ring terminology to the notions
used in coalgebra theory. About hereditariness, the concept is
well-known in coalgebras (cf. \cite{blashered}) and it is not needed
any explication. The ``coalgebraic'' version of noetherianess is the
so-called co-noetherianess (cf. \cite{nttstrictly}). We recall that
a comodule $M$ is said to be co-noetherian if every quotient of $M$
is embedded in a finite direct sum of copies
 of $C$. Nevertheless, we shall use a weaker concept: strictly
 quasi-finiteness \cite{nttstrictly}, namely, $M$ is strictly quasi-finite
  if every quotient of $M$ is quasi-finite.
  This is due to fact that we may reduce the problem to socle-finite
   coalgebras and then, under this condition, both classes of comodules coincide
\cite[Proposition 1.6]{nttstrictly}. Finally, following \cite{jmr},
a coalgebra is called prime if for any subcoalgebras $A,B\subseteq
C$
 such that $A\wedge B=C$, then $A=C$ or $B=C$. For the convenience
 of the reader we present the following example:

 \begin{example}\label{example} Let $C$ be a hereditary colocal coalgebra such that $C/S\cong
C\oplus C$, where $S$ is the unique simple comodule (or
subcoalgebra). We prove that $C$ is not co-noetherian.

Let us consider the subcomodule $N_2$ of $C$ which yields the
following commutative diagram
$$\xymatrix@R=15pt{0 \ar[r] & S  \ar@2{=} [d]\ar[r] & N_2 \ar[r] \ar[d]&
S \ar[r]\ar[d]^-{(0,i)} &0 \\
0 \ar[r] & S  \ar[r] & C \ar[r] & C\oplus C \ar[r] &0}$$ Then,
$N_2\subseteq \Socle^2 C$ and $N_2/S\cong S$. Now,
$$C/N_2 \cong
(C/S)/(N_2/S)\cong (C\oplus C)/S\cong (C\oplus C\oplus C)=C^3.$$
Analogously, let $N_3$ be the subcomodule of $C$ which yields the
commutative diagram
$$\xymatrix@R=15pt{0 \ar[r] & N_2 \ar@2{=} [d]\ar[r] & N_3 \ar[r] \ar[d]&
S \ar[r]\ar[d]^-{(0,0,i)} &0 \\
0 \ar[r] & N_2  \ar[r] & C \ar[r] & C\oplus C\oplus C \ar[r] &0}.$$
Again, $N_3/N_2\cong S$ and
$$C/N_3 \cong
(C/N_2)/(N_3/N_2)\cong (C\oplus C\oplus C)/S\cong (C\oplus C\oplus
C\oplus C)=C^4.$$

If we continue in this way, we obtain a increasing family of
subcomodules $\{N_t\}_{t\geq 1}$, where $N_1=S$, such that
$C/N_t\cong C^{t+1}$ for any $t\geq 1$. Let us consider the
uniserial subcomodule $N=\cup_{t\geq 1} N_t$ of $C$ whose
composition series (or Loewy series) is given by
$$ 0\subset S\subset N_2\subset N_3\subset\cdots\subset N.$$
The comodule $C/N$ has infinite dimensional socle. To see this, for
each $i\geq 0$, consider the short exact sequence
$$\xymatrix{0\ar[r] & N/N_i \ar[r] & C/N_i \ar[r] & C/N \ar[r]& 0}$$
which yields the exact sequence
$$\xymatrix{0\ar[r] & \Soc(N/N_i)\ar[r] & \Soc(C/N_i) \ar[r] & \Soc(C/N)},$$
where $\Soc(N/N_i)\cong S$ and $\Soc(C/N_i)\cong S^{i+1}$. Thus
$\dim_K \Soc(C/N)\geq i\cdot\hspace{0.1cm}\dim_K S$ for any $i\geq
1$. Consequently, $C$ is not co-noetherian.
\end{example}

\begin{theorem}\label{theorem} Let $C$ be a basic socle-finite
 coalgebra over an arbitrary field. If $C$ is coprime,
hereditary and co-noetherian then $C$ is serial.
\end{theorem}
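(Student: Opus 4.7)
The plan is to prove right seriality; by the left-right symmetry of the hypotheses (basic, socle-finite, hereditary, coprime, and co-noetherian are all self-dual), left seriality then follows by the same argument applied to ${}^C\mathcal{M}$. By Lemma~\ref{zerosimple}, right seriality reduces to showing that $\Soc(E_i/S_i) = \Socle^2 E_i/\Soc E_i$ is zero or simple for every indecomposable injective right $C$-comodule $E_i$. Since $C$ is hereditary, $E_i/S_i$ is itself an injective right $C$-comodule, so it decomposes as $E_i/S_i = \bigoplus_{j \in I_C} E_j^{n_{ij}}$ for non-negative integers $n_{ij}$; since $C$ is socle-finite and co-noetherian, equivalently strictly quasi-finite by \cite[Proposition~1.6]{nttstrictly}, the quotient $E_i/S_i$ is quasi-finite and the sum is finite. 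It therefore suffices to derive a contradiction from the assumption that $\sum_j n_{i_0 j} \ge 2$ for some $i_0 \in I_C$.

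Assuming such an $i_0$ exists, coprimality of $C$ should force the valued Gabriel quiver $(Q_C, d_C)$ to be strongly connected: in a socle-finite prime coalgebra no proper nonempty subset of simples can be closed under predecessors, by the characterization of prime coalgebras in \cite{jmr}. Hence every simple $S_j$ with $n_{i_0 j} \neq 0$ lies on an oriented path back to $S_{i_0}$ in $(Q_C, d_C)$, producing a cycle through $i_0$ on which the branching at $i_0$ already provides a multiplicity at least two at one step. Iterating the hereditary decomposition of $E_k/S_k$ along this cycle and tracking the successive Loewy layers, $\dim_K \Socle^k E_{i_0}$ should grow exponentially in $k$. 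Following the construction of Example~\ref{example} along a uniserial thread through these Loewy layers, one builds an ascending chain $N_1 \subset N_2 \subset \cdots$ of subcomodules of $E_{i_0}$ whose union $N$ satisfies $\dim_K \Soc(E_{i_0}/N) = \infty$. Because $E_{i_0}$ is a direct summand of $C$, the quotient $C/N$ then has infinite-dimensional socle, contradicting the strict quasi-finiteness of $C$.

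The principal difficulty is making the propagation step precise, i.e., deducing from an initial branching at $i_0$ the exponential growth along a cycle supplied by coprimality, and ensuring that the chain $\{N_t\}$ can be chosen so that the repeated copies of a single simple genuinely accumulate in $\Soc(E_{i_0}/N)$. A technically cleaner route would be to first localize at the set of simples lying on a single cycle through $i_0$: hereditariness is preserved under localization, and one would need to verify the analogous stability of coprimality and strict quasi-finiteness (or of a weaker combinatorial remnant sufficient to run the argument). Under such a reduction one would be left with a socle-finite, hereditary, co-noetherian, colocal coalgebra $D$ with $D/\Soc D \cong D^m$ for some $m \ge 2$, at which point Example~\ref{example} applies verbatim to yield the desired contradiction.
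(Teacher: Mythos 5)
Your high-level strategy is genuinely close to the paper's (kill branching using the co-noetherian growth argument of Example~\ref{example}, and use primeness to control connectivity), but two essential steps are missing, and one of them you explicitly concede. The ``propagation step'' --- deducing from a branching $\sum_j n_{i_0 j}\ge 2$ an eventual accumulation of socle in a quotient of $E_{i_0}$ --- is the heart of the theorem, and your sketch does not supply it. The paper's resolution is to localize at the \emph{single} vertex $S_{i_0}$: by \cite[p.~376, Corollary~5]{gabriel} and \cite[Proposition~1.8]{nttstrictly} the colocal coalgebra $e_{i_0}Ce_{i_0}$ is again hereditary and co-noetherian, so by the colocal case (which is exactly Example~\ref{example}) its valued Gabriel quiver is a point or a loop labelled $(1,1)$; on the other hand, the first component of that loop's label is computed as $\sum_l a_0^l a_1^l\cdots a_{n_l}^l$, a sum over \emph{all} paths in $(Q_C,d_C)$ from $S_{i_0}$ to $S_{i_0}$ through torsion vertices of the products of the arrow labels. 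This label formula is precisely the quantitative propagation device you are missing: it converts any branching that returns to $S_{i_0}$ into a loop label $\ge 2$, with no explicit exponential-growth estimate needed. Note also that your proposed ``cleaner route'' of localizing at all simples on a cycle does not produce a colocal coalgebra when the cycle has more than one vertex, so it does not reduce to Example~\ref{example} as you claim.

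The second gap is the assertion that coprimality ``should force'' strong connectivity of $(Q_C,d_C)$ by a characterization in \cite{jmr}. No such off-the-shelf statement is being invoked correctly; the paper has to \emph{prove} the relevant fact (any two vertices lie on a common cycle), and this occupies the final third of its argument. Concretely: one localizes at $e=e_x+e_y$, uses \cite[Proposition~4.1]{jmr} to keep primeness, and then rules out two configurations --- no path in either direction (then $eCe$ decomposes, contradicting primeness via \cite[Lemma~1.4]{jmr}) and a path in one direction only (then $e_yCe_x=0$, so $eCe\cong\left(\begin{smallmatrix} e_yCe_y & e_xCe_y\\ 0 & e_xCe_x\end{smallmatrix}\right)$ is a bipartite coalgebra, whence $eCe=e_yCe_y\wedge e_xCe_x$ is not prime). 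Without this argument, your cycle ``supplied by coprimality'' is not available, and without the cycle your contradiction never gets off the ground. As written, the proposal is an outline of the correct theorem with the two load-bearing steps left open.
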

\begin{proof} If $C$ is colocal, then the valued Gabriel quiver of $C$
is either a single point or a vertex with a loop labeled by a pair
$(d',d'')$. Now, if $d'\geq 2$ or $d''\geq 2$, proceeding as in
Example \ref{example}, $C$ is not co-noetherian. Thus $d'=d''=1$
and, by Theorem \ref{shapeserial}, $C$ is serial. Assume then that
$C$ is not colocal. Let us first develop some properties about the
valued Gabriel quiver of a localized coalgebra of $C$. These are
inspired by the ones obtained in \cite{jmnr} for path coalgebras.
Let us suppose that $S_x$, $S_y$ and $S_z$ are three simple
$C$-comodules (where $S_x$ could equals $S_z$) such that there is
path in $(Q_C,d_C)$
$$\xymatrix@C=40pt{S_x \ar[r]^-{(d_1,d_2)} & S_y \ar[r]^-{(c_1,c_2)} & S_z}.$$
Let $e\in C^*$ be an idempotent and $eCe$ the localized coalgebra
associated to $e$ whose quotient functor we denote by $T_e$. Assume
that $S_x$ and $S_z$ are torsion-free and $S_y$ is torsion. Since
$C$ is hereditary, $E_z/S_z\cong \bigoplus_{j\in J} E^{r_j}_j
\bigoplus \bigoplus_{t\in T} E^{r_t}_t$, where $S_j$ is torsion for
 all $j\in J$ and $S_t$ is torsion-free for all $t\in T$, and $r_\alpha$
  is a positive integer for any $\alpha\in J\cup T$. Now, since
  there is an arrow from $S_y$ to $S_z$, $y\in J$ and
  $E^{r_y}_y\subseteq E_z/S_z$, where $r_y=c_1$. Then $T_e(E^{c_1}_y)\subseteq T_e(E_z/S_z)$.
Finally, since $S_x^{d_1}\subseteq T_e(E_y)\cong T_e(E_y/S_y)$ then
$S_x^{d_1 c_1}\subseteq T_e(E_z/S_z)=\overline{E}_z/S_z$.
 That is, there exists an arrow
$$\xymatrix@C=40pt{S_x \ar[r]^{(h_1,h_2)} & S_z}$$ in $(Q_{eCe},d_{eCe})$
 such that $h_1\geq d_1 c_1$. By
Proposition \ref{Gabvalued}, it is easy to see that $h_2\geq d_2
c_2$. Note that the hereditariness is a left-right symmetric
property.

By an easy induction one may prove that if there is a path
\begin{equation}\label{path} \xymatrix@C=40pt{S_x \ar[r]^-{(a_0,b_0)} & S_{1}
\ar[r]^-{(a_1,b_1)} & \cdots \ar[r]& S_{n-1}
\ar[r]^-{(a_{n-1},b_{n-1})} & S_n \ar[r]^-{(a_n,b_n)} &
S_z}\end{equation} such that $S_i$ is torsion, for all $i=1,\ldots
,n$. Then there is an arrow
$$\xymatrix@C=40pt{S_x \ar[r]^{(h_1,h_2)} & S_z}$$ in $(Q_{eCe},d_{eCe})$
 such that $h_1\geq a_0a_1\cdots a_n$ and
 $h_2\geq b_0b_1\cdots b_n$. Furthermore, following this procedure, one may prove that
 if $\mathfrak{P}=\{p^l\}_{l\in \Lambda}$ is non empty, where $\mathfrak{P}$ is the set of
 all possible paths $p^l$
 in $(Q_C,d_C)$ as described in (\ref{path}), i.e., starting at $S_x$,
  ending at $S_y$ and whose intermediate vertices are torsion, then
there is an arrow
$$\xymatrix@C=40pt{S_x \ar[r]^{(h_1,h_2)} & S_z}$$ in $(Q_{eCe},d_{eCe})$
 such that $h_1=\sum_{l} a^l_0a^l_1\cdots a^l_{n_l}$ and
 $h_2=\sum_{l} b^l_0b^l_1\cdots b^l_{n_l}$. Here we have denoted by $a^l_0, \ldots ,
 a^l_{n_l}$ and by $b^l_0, \ldots ,
 b^l_{n_l}$
 the first and the second component, respectively, of the labels of the arrows whose
 composition build the path $p^l$. We refer the reader to
 \cite{navarro} for more details about injective comodules and the
 localization functors.

Now we consider a primitive orthogonal idempotent $e_x\in C^*$ and
$e_xCe_x$ the localized coalgebra of $C$ associated to $e_x$. By
\cite[Proposition 1.8]{nttstrictly} and \cite[p. 376, Corollary
5]{gabriel}, $e_xCe_x$ is co-neotherian and hereditary,
respectively. Therefore, following the colocal case, the valued
Gabriel quiver of $e_xCe_x$ must be a single point or a vertex with
a loop labeled by $(1,1)$. As a consequence, by the above
considerations, each vertex of the valued Gabriel quiver of $C$ is
inside of at most one cycle and, if exists, the arrows of that cycle
are labeled by $(1,1)$.

Finally, we prove that for each pair of vertices of $Q_C$ there is a
cycle passing through these two vertices. This yields the statement
of the theorem since, together with the above conditions, the only
possible quiver is $(Q_C,d_C)=\widetilde{\mathbb{A}}_n$ for some
$n\geq 1$ and then $C$ is serial.

Fix two different simple comodules $S_x$ and $S_y$. Let $e_x$ and
$e_y$ be the primitive orthogonal idempotents in $C^*$ associated to
$S_x$ and $S_y$, respectively. We set $e=e_x+e_y$. By
\cite[Proposition 4.1]{jmr}, $eCe$ is prime. First, let us suppose
that there is no path in $Q_C$ from $S_x$ to $S_y$ nor vice versa.
Then $eCe$ has two connected components and, by \cite[Corollary
2.4(b)]{simson06}, $eCe$ is not indecomposable. Thus $eCe$ is not
prime (cf. \cite[Lemma 1.4]{jmr}). Now, suppose that there is a path
from $S_x$ to $S_y$ but there is no path from $S_y$ to $S_x$. Then
the valued Gabriel quiver of $eCe$ is a subquiver of the following
quiver:
$$\xymatrix@R=50pt{ S_x \ar[r]^-{(a,b)} \ar@(ur,ul)[] & S_y \ar@(ur,ul)[]}.$$
By \cite[Lemma 3.7]{navarro}, $e_xCe_y=T_x(E_y)\neq 0$ and
$e_yCe_x=T_y(E_x)=0$. Therefore, there is a vector space direct sum
decomposition $eCe=e_xCe_x\oplus e_xCe_y\oplus e_yCe_y$. A
straightforward calculation shows that the linear map
$$\Psi: D=\left(
               \begin{array}{cc}
                 e_yCe_y & e_xCe_y \\
                 0 & e_xCe_x \\
               \end{array}
             \right)  \longrightarrow eCe,$$
           between $eCe$ and the bipartite coalgebra $D$ (in the sense of \cite{justus2}),
             given by $$\Psi\left(
                                    \begin{array}{cc}
                                      a & b \\
                                      0 & c \\
                                    \end{array}
                                  \right) =a+b+c$$ is an
                                  isomorphism of coalgebras, see \cite{cuadra} for definitions
                                  and details about the structures of the spaces
                                  $e_xCe_x$, $e_yCe_y$ and $e_xCe_y$. We recall that the coalgebra structure of the
                                  bipartite coalgebra $D$ is given by the formulae:

\begin{itemize}
\item
$\Delta(a+b+c)=\Delta_{y}(a)+\rho_y(b)+\rho_x(b)+\Delta_{x}(c)$,
where $\rho_y$ and $\rho_x$ are the $e_yCe_y$-$e_xCe_x$-bicomodule
structure maps of $e_xCe_y$; and $\Delta_x$ and $\Delta_y$ are the
comultiplication of the coalgebras $e_yCe_y$ and $e_xCe_x$,
respectively.
\item $\epsilon(a+b+c)=\epsilon_y(a)+\epsilon_x(b)$, where
$\epsilon_y$ and $\epsilon_x$ are the counit of the coalgebras
$e_yCe_y$ and $e_xCe_x$, respectively.
\end{itemize}
                                  Then
                                   $eCe=e_yCe_y\wedge e_xCe_x$ and therefore $eCe$ is not prime.
 \end{proof}

Now we prove the Eisenbud-Griffith Theorem for coalgebras.

\begin{corollary}\label{EGtheorem}
If $C$ is a subcoalgebra of a prime, hereditary and strictly
quasi-finite (left and right)
 coalgebra over an arbitrary field,
then $C$ is serial.
\end{corollary}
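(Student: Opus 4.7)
The plan is to deduce the Corollary from Theorem \ref{theorem} in two stages: first establish that the ambient coalgebra $D$ is itself serial, and then transfer seriality from $D$ down to its subcoalgebra $C$. Since seriality, primeness, hereditariness and strict quasi-finiteness are all Morita--Takeuchi invariants and all arguments are left--right symmetric, the first stage may be carried out under the assumption that $D$ is basic, working with right comodules.

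For the first stage, Proposition \ref{equivserial} reduces the problem to showing that every socle-finite localized coalgebra $eDe$ is serial. For each idempotent $e \in D^*$ that is a finite sum of pairwise orthogonal primitive idempotents, I would verify the hypotheses of Theorem \ref{theorem} for $eDe$: basicness and socle-finiteness are immediate; hereditariness is preserved under localization by \cite[p.~376, Corollary 5]{gabriel}; strict quasi-finiteness descends to $eDe$ by \cite[Proposition 1.8]{nttstrictly}, and combined with socle-finiteness yields co-noetherianess by \cite[Proposition 1.6]{nttstrictly}; and primeness of $eDe$ follows from primeness of $D$ by \cite[Proposition 4.1]{jmr}. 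Theorem \ref{theorem} then yields that each $eDe$ is serial, so $D$ is serial.

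For the second stage, let $E$ be an indecomposable injective right $C$-comodule with socle the simple comodule $S$. Via the coalgebra inclusion $C \hookrightarrow D$, every $C$-comodule acquires a $D$-comodule structure whose $D$-subcomodules coincide with its $C$-subcomodules as subspaces. In particular, $S$ remains a simple $D$-comodule and is essential in $E$ as a $D$-comodule, so the $D$-injectivity of the $D$-injective envelope $E_D(S)$ extends the inclusion $S \hookrightarrow E_D(S)$ to an injective $D$-homomorphism $E \hookrightarrow E_D(S)$. Since $D$ is serial, the indecomposable injective $D$-comodule $E_D(S)$ is uniserial; by Lemma \ref{lemilla}, $E$ is uniserial as a $D$-subcomodule of $E_D(S)$, and the lattice of $C$-subcomodules of $E$ is therefore a chain. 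Consequently every indecomposable injective right $C$-comodule is uniserial, so $C$ is right serial, and the symmetric argument gives left seriality.

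The principal obstacle I foresee is the preservation of primeness under the socle-finite localizations of $D$ required in the first stage: this is what allows Theorem \ref{theorem} to be invoked for each $eDe$, and rests on the applicability of \cite[Proposition 4.1]{jmr} in this generality. Without it the chain of implications breaks down, so confirming that the cited result covers all the localizations $eDe$ that arise is the delicate point of the argument; everything else is a matter of threading together Morita--Takeuchi invariance, Proposition \ref{equivserial}, Lemma \ref{lemilla}, and the standard comparison between $C$- and $D$-subcomodules of $C$-comodules.
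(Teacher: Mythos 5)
Your proposal is correct and follows essentially the same route as the paper: reduce to showing the ambient coalgebra is serial, then via Proposition \ref{equivserial} pass to its socle-finite localizations, verify the hypotheses of Theorem \ref{theorem} for each of them using exactly the citations you list (\cite{gabriel}, \cite{jmr}, \cite{nttstrictly}), and conclude. The only divergence is that the paper dispatches the subcoalgebra-inheritance step at the outset by citing \cite[Proposition 1.5]{cuadra-serial} (a subcoalgebra of a serial coalgebra is serial), whereas you reprove that fact directly through the $D$-injective envelope and Lemma \ref{lemilla}; both are valid.
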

\begin{proof}
By \cite[Proposition 1.5]{cuadra-serial}, we may assume that $C$ is
prime, hereditary and strictly quasi-finite itself. Let $e\in C^*$
be an idempotent such that the localized coalgebra
 $eCe$ is socle-finite.
By \cite[p. 376, Corollary 5]{gabriel}, \cite[Proposition 4.1]{jmr}
and \cite[Proposition 1.8]{nttstrictly},
 $eCe$ is hereditary, prime
and right strictly quasi-finite, respectively. Moreover, by
\cite[Proposition 1.5]{nttstrictly},
 $eCe$ is co-noetherian. Therefore,
by Theorem \ref{theorem}, $eCe$ is serial. Thus the result follows
from Proposition \ref{equivserial}.
\end{proof}

\end{document}